\newtheorem{lem}{Lemma}[section]   
\newtheorem{cor}[lem]{Corollary}
\newtheorem{thm}[lem]{Theorem}
\theoremstyle{remark}
\newcommand\A{{\it A}}
\newcommand\HP{\mathrm{HP}}
\begin{document}

\begin{center}

\Large{\bf Lindstr\"om's conjecture on a class of algebraically non-representable matroids}
\normalsize

{\sc Rigoberto Fl\'orez}

{\sc State University of New York at Binghamton \\
     Binghamton, NY, U.S.A.\ 13902-6000}



\end{center}

\bigskip

\footnotesize

{\it Abstract:} {Gordon introduced a class of matroids $M(n)$, for
prime $n\ge 2$,  such that $M(n)$ is algebraically representable,
but only in characteristic $n$. Lindstr\"om proved that $M(n)$ for
general $n\ge 2$ is not algebraically representable if $n>2$ is an
even number, and he conjectured that if $n$ is a composite number it
is not algebraically representable. We introduce a new kind of
matroid called {\it harmonic matroids}, of which full algebraic
matroids are an example. We prove the conjecture in this more
general case.} \normalsize

\thispagestyle{empty}

\vspace{0.2cm}
\section {introduction} An \emph{algebraic representation} over a field $F$ of a
matroid $M$ is a mapping $f$ from the elements of $M$ into a field
$K$, which contains $F$ as subfield, such that a subset $S$ is
independent in $M$ if and only if $|f(S)|=|S|$ and $f(S)$ is
algebraically independent over $F$.

Let $F$ and $K$ be two fields, such that $F \subseteq K.$ Assume
that $F$ and $K$ are algebraically closed and $K$ has finite
transcendence degree over $F$. Then those subfields of $K$ which are
algebraically closed and contain $F$ form the flats of a matroid. We
call this matroid the  $\emph{full algebraic matroid}$ $\A(K/F)$. A
set $S \subseteq K$ is independent in this matroid if and only if
its elements are algebraically independent over $F$. Therefore, a
full algebraic matroid $\A(K/F)$ of rank 3 consists of the field $F$
(the flat of transcendence degree zero over $F$), atoms (flats of
transcendence degree one over $F$), lines (flats of transcendence
degree two) and a plane, which is the field $K$ (since $K$ is of
transcendence degree 3 over $F$).

Gordon introduced a class of matroids $M(p)$ of rank 3 with $p \ge
2$ a  prime and proved in 1986 \cite{ga} that $M(p)$ is
algebraically representable. We state formally his result as
follows.

\begin{thm} [{\cite[Theorem 2]{ga}}] \label{gordon}
If $p$ is a prime, then $M(p)$ is algebraically representable over a
field $F$, when and only when the characteristic of $F$ is $p$.
\end{thm}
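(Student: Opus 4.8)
The plan is to prove the two implications separately, using the explicit combinatorial description of $M(p)$ as a rank-$3$ point configuration obtained by a von Staudt coordinatization. Recall that $M(p)$ is assembled by fixing a ``frame'' of non-collinear points together with two distinguished points on a line $\ell$ that play the roles of $0$ and $1$, then adjoining in turn the points representing $1+1$, $(1+1)+1$, \dots, up to the $p$-th partial sum, each new point being determined by the collinearity prescribed by the classical projective ``addition gadget'', and finally imposing one more collinearity that identifies the point representing $\underbrace{1+\cdots+1}_{p}$ with the point $0$; at the same time the matroid's independence conditions force $0$, $1$, $1+1$, \dots, $\underbrace{1+\cdots+1}_{p-1}$ to be pairwise distinct. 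Consequently, any representation of $M(p)$ over a field $F$ --- linear or algebraic --- produces a coordinate structure in which $\underbrace{1+\cdots+1}_{p}=0$ while $\underbrace{1+\cdots+1}_{k}\ne 0$ for $0<k<p$, and for a field these relations together are equivalent to $\operatorname{char}F=p$ (here one uses that $p$ is prime).

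For the ``if'' direction, suppose $\operatorname{char}F=p$. Then the gadget above is realizable \emph{linearly} already over the prime field $\mathbb{F}_p$, where $p\cdot 1=0$ while $1,2,\dots,p-1$ are distinct and the projective plane over $\mathbb{F}_p$ (or over a finite extension, still of characteristic $p$, should the auxiliary points need more room) contains the whole configuration; this step is a finite verification. It remains to pass from a linear representation to an algebraic one over the same field: choosing $t_1,t_2,t_3$ algebraically independent over $F$ and sending an element $e$ with coordinate vector $v_e$ to the linear form $(v_e)_1 t_1+(v_e)_2 t_2+(v_e)_3 t_3\in F(t_1,t_2,t_3)$ works, because a family of $F$-linear forms in $t_1,t_2,t_3$ is algebraically independent over $F$ precisely when it is $F$-linearly independent. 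Hence $M(p)$ is algebraically representable over every field of characteristic $p$.

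For the ``only if'' direction, suppose $f$ is an algebraic representation of $M(p)$ over $F$ into some $K$; replacing $K$ by a finitely generated and then algebraically closed subextension, we may take $f$ to embed $M(p)$ as a submatroid of $\A(\overline{K}/\overline{F})$. Unlike in the linear case, a matroid collinearity here only forces \emph{some} nontrivial polynomial relation among the three coordinates, not an affine one, so the main work --- and the step I expect to be the real obstacle --- is to show that the specific pattern of flats defining $M(p)$ still forces $\underbrace{1+\cdots+1}_{p}=0$ with no smaller partial sum vanishing. I would attack this with the derivation (Jacobian) criterion for algebraic dependence, which trades algebraic dependence for linear dependence of differentials and thereby converts the von Staudt collinearities into linear relations over the coordinate field, up to $q$-th power (Frobenius) twists when $\operatorname{char}F=q>0$; tracking those twists through the addition gadget still yields $p\cdot 1=0$ in the coordinate field, since $(a+b)^q=a^q+b^q$ in characteristic $q$ means a Frobenius twist neither creates nor destroys this additive identity, so $q\mid p$ and hence $q=p$. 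When $\operatorname{char}F=0$ there is no Frobenius, the differentials reproduce the linear von Staudt arithmetic faithfully, $p\cdot 1\ne 0$ gives a contradiction, and so the only admissible characteristic is $p$, which together with the ``if'' direction completes the proof.
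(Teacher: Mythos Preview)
The paper does not prove this theorem; it is quoted verbatim from Gordon \cite{ga} and then used as a black box in the proofs of Theorem~\ref{main} and Corollary~\ref{submain}. There is therefore no in-paper proof to compare your proposal against.

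On your sketch itself: your reading of $M(p)$ as a von Staudt addition configuration is compatible with the paper's explicit definition (two pencils $\{a_i\}$ and $\{b_i\}$ through $d$, with $c_0,c_1$ implementing the ``shift by $1$'' map, and the wrap-around line $\{a_{p-1},b_0,c_1\}$ encoding $p\cdot 1=0$), and the ``if'' direction via a linear realization in characteristic $p$ followed by the standard linear-to-algebraic embedding is correct and is essentially what the paper invokes elsewhere (cf.\ the proof of Theorem~\ref{L0embedding}, part (b)$\Rightarrow$(c)). For the ``only if'' direction, your derivation/Jacobian strategy is in the same circle of ideas as Lindstr\"om's harmonic-conjugate method \cite{lhc} underlying Lemma~\ref{lindstrom}, and is indeed how Gordon proceeds; but what you summarize in one sentence --- that Frobenius is transparent to additive identities, so the $p$-fold sum still vanishes after passing to differentials --- is precisely where the work lies. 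One must also show that the \emph{distinctness} of the points representing $0,1,\dots,p-1$ survives the passage to differentials (inseparability can collapse distinct algebraic elements to the same tangent vector), and your outline does not address this. You correctly identify it as the real obstacle; as written it is a plan, not a proof.
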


Lindstr\"om \cite{lhc} generalized the concept of \emph{harmonic
conjugate}  from projective geometry. He used this concept to show
that a matroid $M(n)$, defined for every $n \ge 2$, generalizing
Gordon's matroids, is not algebraically representable if $n$ is
even, and conjectured that if $n$ is a composite number then  $M(n)$
is not algebraically representable \cite{lcn}. We prove that this
conjecture is true.

\begin{thm} \label{main}
$M(n)$ is algebraically representable if and only if $n$ is a prime
number.
\end{thm}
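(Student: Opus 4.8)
\emph{Proof proposal.} The backward implication of Theorem~\ref{main} is immediate: if $n$ is a prime $p$, then $M(n)=M(p)$ and this matroid is algebraically representable over any field of characteristic $p$ by Theorem~\ref{gordon}. So the plan is to establish the forward implication --- Lindstr\"om's conjecture --- that algebraic representability of $M(n)$ forces $n$ to be prime. First I would reduce to full algebraic matroids: given an algebraic representation $f$ of $M(n)$ over $F$ into $K$, passing to algebraic closures does not change algebraic (in)dependence of subsets, so $f$ exhibits $M(n)$ as a restriction of a full algebraic matroid, and restricting to the algebraic closure of the subfield generated by $f(M(n))$ we may take this full algebraic matroid $\A(K/F)$ to have rank $3$. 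It then suffices to show that no rank-$3$ full algebraic matroid contains $M(n)$ as a restriction when $n$ is composite.

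Next I would invoke the harmonic structure. Recall that $M(n)$ is the matroid obtained by running the harmonic-conjugate (von Staudt) construction $n$ steps along a line: it has a distinguished line $\ell$ carrying points $\bar 0,\bar 1,\dots,\bar{n-1}$ together with auxiliary points off $\ell$ whose dependencies certify that each $\bar{k+1}$ is the harmonic conjugate assembled from $\bar 0,\bar 1$ and the earlier points, and $M(n)$ is arranged so that the point playing the role of ``$\bar n$'' coincides with $\bar 0$ while $\bar 0,\dots,\bar{n-1}$ remain pairwise distinct. The crucial input is the structural theorem that \emph{a full algebraic matroid is a harmonic matroid}: in such a matroid the complete-quadrilateral construction of a harmonic conjugate produces a point independent of the auxiliary choices. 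Granting this, any embedding of $M(n)$ into $\A(K/F)$ pins the images of $\bar 0,\bar 1,\bar 2,\dots$ to the uniquely determined harmonic-conjugate points on the image of $\ell$.

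Finally I would analyze the set $I=\{\,k\ge 0 : \bar k=\bar 0 \text{ in } \A(K/F)\,\}$. Since the von Staudt ladder realizes on $\ell$ both the addition and the multiplication of a field-like structure, $I$ is closed under addition and under subtracting a smaller index from a larger one, and --- using the harmonic-matroid axioms to rule out zero divisors on $\ell$ --- $ab\in I$ with $a,b\ge 1$ forces $a\in I$ or $b\in I$. Hence the least positive element of $I$, if it exists, is prime (indeed it equals $\operatorname{char}F$). But the embedding of $M(n)$ forces $n\in I$ while $1,\dots,n-1\notin I$, so this least element is exactly $n$; therefore $n$ is prime.

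The main obstacle I anticipate is establishing --- and then fully exploiting --- that a full algebraic matroid is a harmonic matroid: harmonic conjugates must be shown to be well defined despite inseparable extensions and Frobenius twists in characteristic $p$, and the von Staudt line one builds must be shown to carry no zero divisors. That ``no zero divisors'' feature is precisely what forces the collapse index $n$ to be prime rather than a prime power or an arbitrary composite, and it is where the argument must go beyond Lindstr\"om's treatment of even $n$, in which the collapse already visible in characteristic $2$ could be read off more directly.
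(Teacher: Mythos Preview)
Your reduction to harmonic matroids is the paper's own starting point, and your reading of $M(n)$ as a von Staudt ladder along one of its long lines is sound. You also correctly locate the crux: one must show that the least period of the harmonic-conjugate iteration is prime, i.e., that the line carries ``no zero divisors.'' The gap is that your proposed mechanism for this---``using the harmonic-matroid axioms'' together with a von Staudt realization of multiplication---is not available. The harmonic-matroid axiom guarantees only \emph{uniqueness} of the harmonic conjugate; it does not guarantee that two coplanar lines meet, and the von Staudt product requires exactly such intersections. In a full algebraic matroid two rank-$2$ flats inside a rank-$3$ flat may meet only in the base field, so the auxiliary intersection points the multiplication construction needs simply may not exist. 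Your assertion that $ab\in I$ forces $a\in I$ or $b\in I$ (equivalently, that the period equals $\operatorname{char}F$) is therefore left unproven, and it is equivalent to the theorem itself.

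The paper supplies the missing mechanism in two steps. First (Lemma~\ref{mnembedding}) it shows that any embedding of $M(n)$ in a harmonic matroid $H$ extends uniquely to an embedding of $L_0(\mathbb{Z}_nK_3)$: the extra points $c_2,\dots,c_{n-1}$ are produced as successive harmonic conjugates, and the quadrilaterals needed for this already live in $M(n)$. Second (Lemmas~\ref{nonembedding} and~\ref{nonembedding2}) it runs an explicit induction inside $L_0(\mathbb{Z}_nK_3)$, building points $x_3,\dots,x_{p+1}$ on the line $A_0B_0C_0$ with $x_{t+2}$ collinear with each triple $A_i,B_{(t+1)i},C_{ti}$; at every stage the quadrilateral witnessing the required $\HP$ relation is assembled from edges of $L_0(\mathbb{Z}_nK_3)$, not from hypothetical line intersections in $H$. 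When $n$ has a square factor $p^2$ or two distinct prime factors, the resulting $x_{p+1}$ forces a collinearity that contradicts the matroid structure. That induction is precisely the substitute for the unavailable von Staudt multiplication, and it is the idea your proposal is missing.
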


Whoever is familiar with the \emph {complete  lift matroid}  $L_0
(\mathfrak{G} K_3)$ \cite {b3}, should be interested in the
following corollary.

\begin{cor} \label{submain}  Suppose that $\mathfrak{G}$ is a
finite abelian group. Then $L_0 (\mathfrak{G} K_3)$ is algebraically
representable over a field $F$ of characteristic $p$, when and only
when $\mathfrak{G}$ is a subgroup of the additive group of $F$.
\end{cor}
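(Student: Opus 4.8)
The plan is to derive the corollary from Theorem~\ref{main}, Gordon's Theorem~\ref{gordon}, and the rigidity of harmonic matroids that underlies them, together with two structural facts about complete lift matroids. The first is that $M(n)$ is (isomorphic to) the complete lift matroid $L_0(\mathbb Z_n K_3)$ of the cyclic group of order $n$. The second is that if $H\le\mathfrak G$ is a subgroup, then $L_0(H K_3)$ is the restriction of $L_0(\mathfrak G K_3)$ to $e_0$ together with those parallel edges whose gains lie in $H$, the point being that the balanced/unbalanced classification of cycles using only $H$-gains is the same whether computed in $H$ or in $\mathfrak G$; hence $L_0(HK_3)$ is a minor of $L_0(\mathfrak G K_3)$. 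Since a minor of a matroid algebraically representable over a fixed field $F$ is again algebraically representable over $F$, I obtain at once: if $L_0(\mathfrak G K_3)$ is algebraically representable over $F$, then so is $M(n)$ whenever $\mathfrak G$ possesses an element of order $n$.

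For the forward implication, assume $L_0(\mathfrak G K_3)$ is algebraically representable over a field $F$ of characteristic $p$, and let $g\in\mathfrak G$ have order $n\ge 2$. Then $M(n)$ is algebraically representable over $F$, so $n$ is prime by Theorem~\ref{main}, and then $\operatorname{char}F=n$ by Theorem~\ref{gordon}; hence $p>0$ and $n=p$. Thus every element of $\mathfrak G$ has order $1$ or $p$, so $\mathfrak G\cong(\mathbb Z_p)^m$ is elementary abelian. (If instead $\operatorname{char}F=0$ the same argument forces $\mathfrak G=\{1\}$, a trivial finite subgroup of the torsion-free group $(F,+)$, and $L_0(\{1\}K_3)$, which is the direct sum of a three-point line and a coloop, is representable over every field, so the equivalence holds in that case too.) It remains to promote the conclusion that $\mathfrak G$ is elementary abelian of exponent $p$ to the conclusion that $\mathfrak G$ embeds in $(F,+)$, i.e.\ to show $m\le\dim_{\mathbb F_p}F$. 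Here the harmonic-matroid analysis of Theorem~\ref{main} must be used directly rather than as a black box: in a full algebraic matroid $A(K/F)$ realising $L_0(\mathfrak G K_3)$, restricting to one side of the triangle $K_3$ together with $e_0$ produces a harmonic configuration on a line, and the harmonic-conjugate operation studied in the body of the paper endows that line with the structure of the additive group of the transcendence-degree-zero subfield $F$, into which the gain group $\mathfrak G$ is thereby embedded as a subgroup. (Equivalently, the rigidity of harmonic matroids forces this portion of the representation to be $F$-linear, and one then invokes the classical characterisation of $F$-linear representability of lift matroids of group-labelled graphs.)

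For the converse, suppose $\mathfrak G$ is isomorphic to a subgroup of $(F,+)$ via $\iota$. Work over the algebraic closure $\overline{F}$, which has characteristic $p$ and still contains a copy of $\mathfrak G$ additively: in $\overline{F}^3$ place $e_0$ at a vector $u$, put the three vertices of $K_3$ at vectors $v_1,v_2,v_3$ in general position with $u\notin\langle v_1-v_2,v_2-v_3\rangle$, and assign to the edge of $K_3$ between $v_i$ and $v_j$ carrying gain $h\in\mathfrak G$ the vector $v_i-v_j+\iota(h)u$. A routine verification shows that balanced triangles become $3$-circuits, that an unbalanced triangle together with $e_0$ is a $4$-circuit, that two parallel edges together with $e_0$ form a $3$-circuit, and that no further dependencies occur, so this rank-$3$ matroid is exactly $L_0(\mathfrak G K_3)$; replacing the coordinate functions by independent transcendentals over $F$ converts this linear representation into an algebraic representation over $F$.

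The main obstacle is the quantitative half of the forward direction. The reduction to cyclic minors, on its own, only yields that $\mathfrak G$ has exponent $p$; it is blind to the base field $F$, whereas the corollary asserts the sharper fact that $\mathfrak G$ must actually fit inside $(F,+)$ for the very field $F$ over which one represents (so that, for example, $L_0((\mathbb Z_p)^2 K_3)$ fails to be algebraically representable over $\mathbb F_p$, even though $(\mathbb Z_p)^2$ has exponent $p$). Closing this gap is where the harmonic-conjugate construction must be examined carefully enough to see that it genuinely realises the abstract group $\mathfrak G$ inside the additive group of the constants, forcing independent generators of $\mathfrak G$ to correspond to $\mathbb F_p$-independent elements of $F$; this is the delicate point on which the proof turns.
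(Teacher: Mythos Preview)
Your overall strategy matches the paper's: for the backward direction you reproduce Zaslavsky's linear construction (which the paper simply cites as \cite{b4}), and for the forward direction you pass to cyclic submatroids of $L_0(\mathfrak{G}K_3)$ and invoke Gordon's Theorem~\ref{gordon} to pin down the characteristic, exactly as the paper does.

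Two points need correction. First, a minor one: $M(n)$ is \emph{not} isomorphic to $L_0(\mathbb{Z}_n K_3)$; it is a proper submatroid (it contains only $c_0,c_1$ on the $C$-line, while $L_0(\mathbb{Z}_n K_3)$ has $C_0,\dots,C_{n-1}$). This does not damage your argument, since you only use that $M(n)$ is a restriction of $L_0(\mathfrak{G}K_3)$ whenever $\mathfrak{G}$ has an element of order $n$, and that is true.

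Second, and more substantively, the ``quantitative'' obstacle you describe in your last paragraph does not exist, and the example you give is wrong. You claim that $L_0((\mathbb{Z}_p)^2 K_3)$ is not algebraically representable over $\mathbb{F}_p$; in fact it is. It is linear, hence algebraic, over $GF(p^2)$, and because $GF(p^2)/\mathbb{F}_p$ is algebraic, a set is algebraically independent over $GF(p^2)$ if and only if it is algebraically independent over $\mathbb{F}_p$; thus the very same map is an algebraic representation over $\mathbb{F}_p$. More generally, algebraic representability over $F$ and over $\overline{F}$ coincide, so it depends only on $\operatorname{char}F$. Hence once your cyclic-subgroup argument yields $\mathfrak{G}\cong(\mathbb{Z}_p)^m$ with $p=\operatorname{char}F$, you are finished: $(\mathbb{Z}_p)^m$ embeds in $\overline{F}^{\,+}$, and that is how the corollary should be read. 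Your proposed harmonic-conjugate argument to force independent generators of $\mathfrak{G}$ to land in $F$ itself (rather than $\overline{F}$) is therefore unnecessary, and it could not succeed anyway, since the ``constants'' in the full algebraic matroid $A(K/F)$ are by definition the algebraically closed field $\overline{F}$, not $F$. The paper's own proof makes no attempt to address this point, for the same reason.
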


We are interested in $L_0 (\mathfrak{G} K_3)$, because $L_0
({\mathbb Z_n} K_3)$  contains $M(n)$. Any algebraic representation
of $M(n)$ extends to one of  $L_0 ({\mathbb Z_n} K_3)$ and the
representation of $L_0 ({\mathbb Z_n} K_3)$ is well understood
\cite{b4}.

The method of harmonic conjugation generalizes to representation in
many other matroids, including all projective planes that satisfy
the Little Desargues' theorem (a weaker form of Desargues' theorem)
and full algebraic matroids. Lindstr\"om used this method to
construct many algebraically nonrepresentable matroids. We will use
it here in many proofs. One can construct a projective plane in full
algebraic matroids using harmonic conjugation. The study of this
object is, however, left to another paper.

\section {Kinds of matroids}

We say that a matroid $H$ is a \emph {harmonic matroid} if whenever
a  configuration as in Figure \ref {f1} exists in $H$, then there
exists a point $x'$ in $H$ such that, for every configuration in $H$
of the form in Figure \ref {f2}, the points $p', r'$ and $x'$ are
collinear. The point $x'$ is called the \emph{harmonic conjugate} of
$x$ with respect to $y$ and $z$. If $x = x'$, we say that $x$ is
{\it self-conjugate }  with respect to $y$ and $z$.

We define  $\HP(y, x, z, p, q, r, s)$ to mean that the  points $y,
x, z, p, q, r, s$ form a submatroid of $H$ that has the
configuration in Figure \ref{f1}.

\begin{figure} [htbp]
 \centering
\includegraphics[scale=0.5]{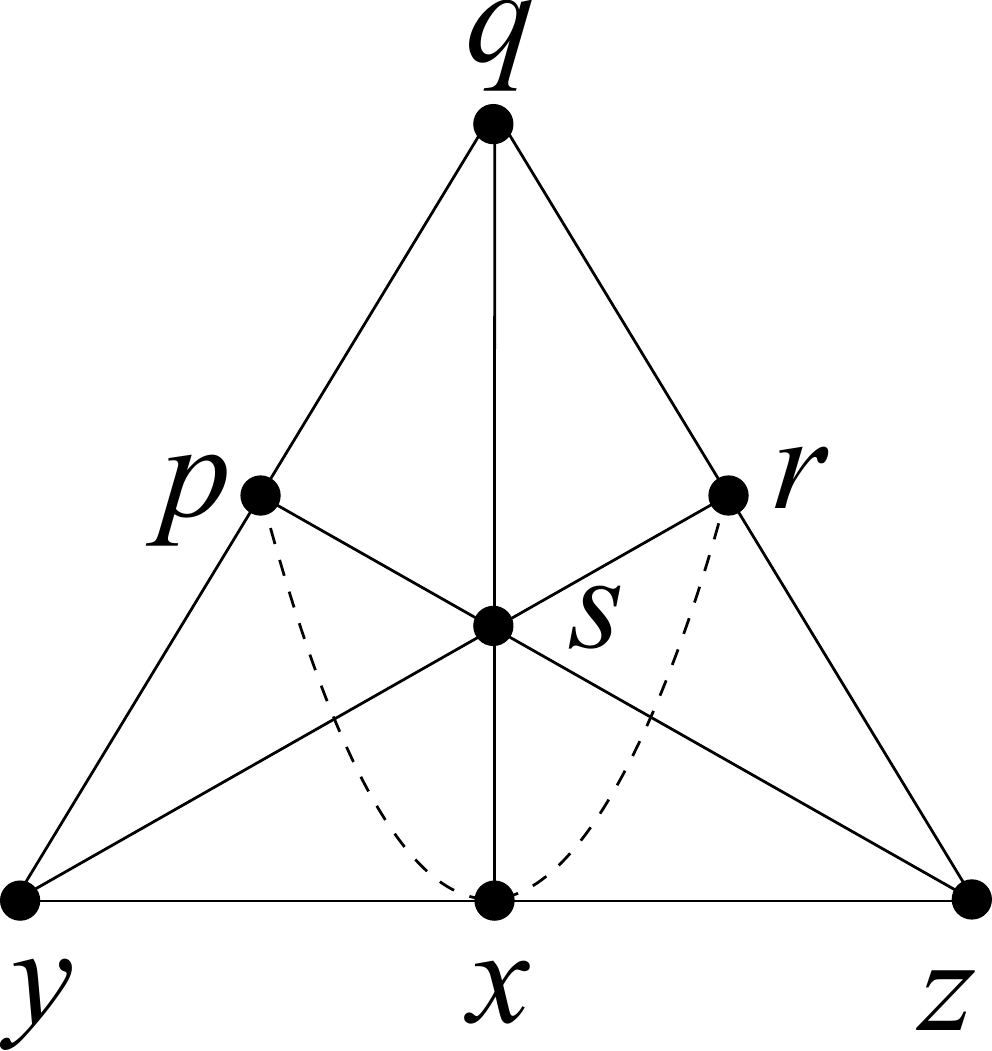}
\caption{The dashed line means that $p, x, r$ can be collinear.} \label{f1}
\end{figure}

\begin{figure} [htbp]
 \centering
\includegraphics[scale=0.5]{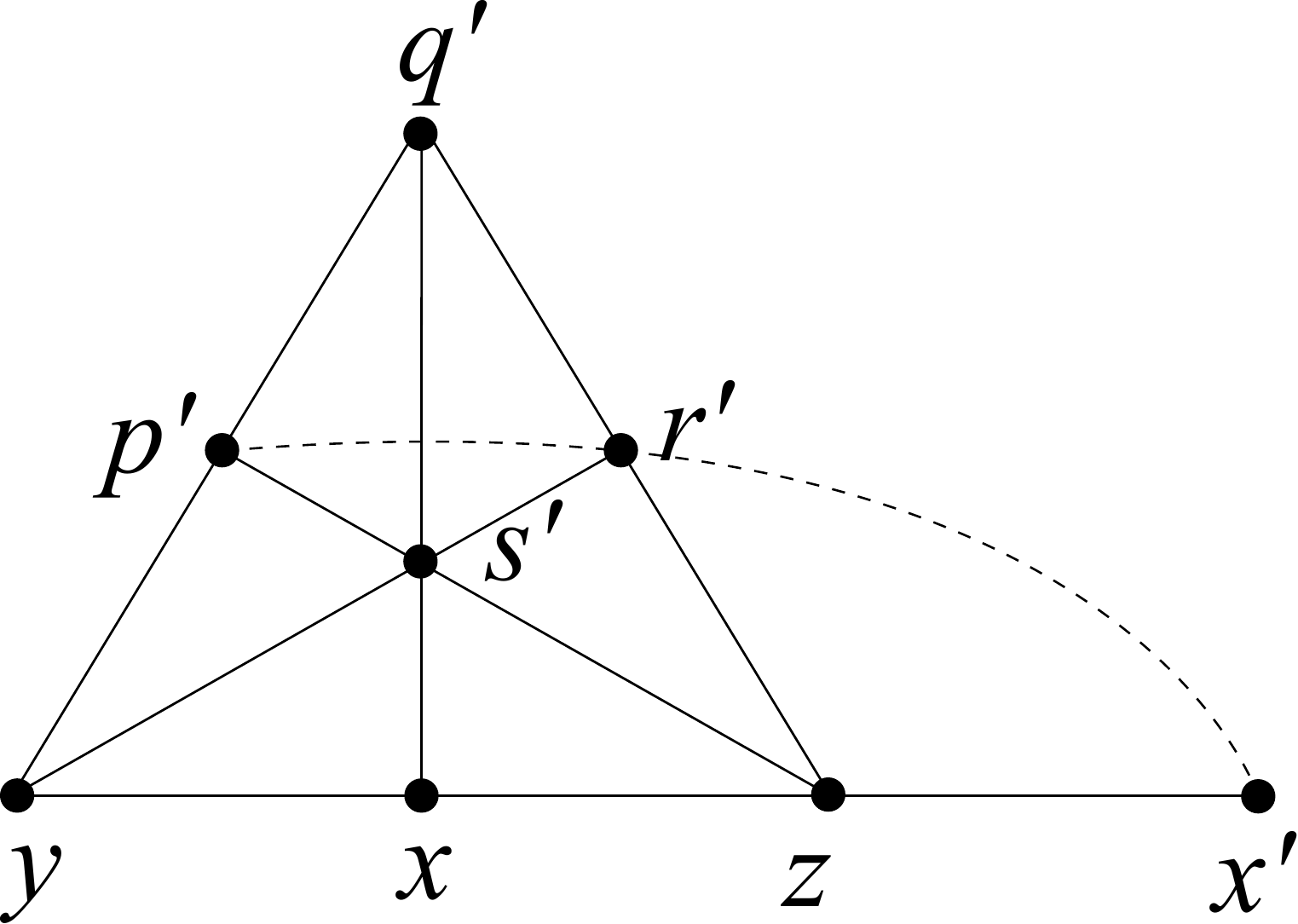}
\caption{ The dashed line means that $\{ p', r', x' \}$  is a new
line resulting from harmonic conjugation.} \label{f2}
\end{figure}

Let us introduce a couple of examples of harmonic matroids. A
Desargues  plane is an example of a harmonic matroid. More
generally, a little Desargues plane is a harmonic matroid (for
reference and definitions see \cite{st}), as shown in Lemma
\ref{little}.

A projective plane $\Pi$ satisfies the \emph {Fano axiom}, if the
Fano c onfiguration does not hold in $\Pi$. The uniqueness of the
harmonic conjugate gives a characterization of a little Desargues
plane. (The harmonic conjugate always exists in a projective plane,
but is not always unique.)

\begin{lem} [{\cite [Theorems  5.7.4 and 5.7.12] {st}}] \label{little}
A projective plane that satisfies Fano's axiom is a little Desargues
plane if and only if the harmonic conjugate is unique.
\end{lem}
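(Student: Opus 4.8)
The plan is to prove the two implications separately, both resting on the quadrangle description of the harmonic conjugate: the conjugate $x'$ of $x$ with respect to $y$ and $z$ on a line $\ell = y\vee z$ is read off from a complete quadrangle two of whose diagonal points are $y$ and $z$ and one of whose sides through the third diagonal point meets $\ell$ in $x$, the opposite side then meeting $\ell$ in $x'$. Fano's axiom enters throughout to keep such configurations nondegenerate: it forces every diagonal triangle to be a genuine triangle, and in particular $x'\ne x$. This is precisely the point at which characteristic $2$ must be excluded --- in a Fano plane the third diagonal point lies on $\ell$, the construction collapses, and every point is self-conjugate, so that neither side of the equivalence carries content.

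For the forward implication, assume $\Pi$ is a little Desargues plane satisfying Fano's axiom, and pass to the affine chart in which $\ell$ is the line at infinity. A quadrangle of the type above becomes a parallelogram with its two pairs of sides in the directions $y$ and $z$ and one diagonal in the direction $x$, and $x'$ is the direction of the other diagonal; one must show this direction does not depend on the parallelogram. I would first normalize one vertex by a translation --- an elation with axis $\ell$, available because little Desargues makes $\Pi$ a translation plane with respect to $\ell$ --- and then dispose of the residual freedom by an incidence chase on the configuration formed by two such parallelograms sharing that vertex, the chase being an application of the little Desargues theorem itself. This is \cite[Theorem~5.7.4]{st}.

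The converse is where the real difficulty lies. Assuming Fano's axiom and uniqueness of the harmonic conjugate, the task is to verify the little Desargues closure in an arbitrary instance, and I would argue by contraposition: from a little Desargues configuration that fails to close one manufactures, via the harmonic-conjugate construction, two complete quadrangles on a common line producing two different conjugates of a common point with respect to a common pair, contradicting uniqueness. The main obstacle is to locate these two quadrangles inside (or built around) the non-closing configuration and to check that the degeneracies forbidden by Fano's axiom never arise along the way; this is \cite[Theorem~5.7.12]{st}. A more computational alternative is to coordinatize $\Pi$ by a planar ternary ring $R$, note that Fano's axiom is exactly the condition $1+1\ne0$ in $R$, translate uniqueness of the harmonic conjugate into algebraic identities on $R$, and conclude that $R$ is an alternative division ring, which by the coordinatization theorem for Moufang planes is equivalent to $\Pi$ being a little Desargues plane.
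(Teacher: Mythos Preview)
The paper does not supply a proof of this lemma at all: it is stated with attribution to Stevenson \cite[Theorems~5.7.4 and 5.7.12]{st} and then used as a black box, so there is no argument in the paper against which to compare your attempt.

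Your sketch is nonetheless a reasonable outline of the standard synthetic treatment. The forward direction is essentially correct: a little Desargues (Moufang) plane is a translation plane with respect to every line, so the elation you invoke is available, and the residual incidence chase is exactly the content of the cited Theorem~5.7.4. For the converse you correctly identify where the work lies; the contrapositive route you describe is the classical one, though in your write-up the phrase ``one manufactures two complete quadrangles'' is doing all the heavy lifting and would need to be made explicit to count as a proof. Your coordinatization alternative is also viable, but be careful with the claim that Fano's axiom is ``exactly'' $1+1\ne 0$ in the ternary ring: this is clean once one already knows $R$ is alternative (or at least that the additive loop is well-behaved), but in a raw planar ternary ring addition need not be associative, and the translation of Fano's axiom into a single algebraic identity requires some care about which quadrangle one coordinatizes.
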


The full algebraic matroid is the second example of  harmonic
matroids. Lindstr\"om proved in \cite{lhc} that, if the
configuration in Figure \ref{f1} holds in a full algebraic matroid
$\A(K/F)$, then the harmonic conjugate of $x$ with respect to $y$
and $z$ exists in $\A(K/F)$ and is unique. Lemma \ref {lindstrom} is
a restatement of this fact.

\begin{lem} [{\cite{lhc}}]\label{lindstrom}
A full algebraic matroid is a harmonic matroid.
\end{lem}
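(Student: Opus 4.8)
The plan is to realize the classical von Staudt construction of the harmonic conjugate inside the function field $K$, and then, working in explicit coordinates coming from a transcendence basis of $K$ over $F$, to check that the point it produces is a genuine atom of $\A(K/F)$ and is independent of the quadrangle used to build it.

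First I would set down the dictionary between $\A(K/F)$ of rank $3$ and transcendence theory. Since $F$ is algebraically closed, every atom is $\overline{F(t)}$ for a single transcendental $t$, every line is $\overline{F(t,u)}$, and a triple of atoms is collinear exactly when some (equivalently any) choice of generators for them is algebraically dependent over $F$. Submodularity of transcendence degree supplies the numerology needed to run a geometric construction: if two lines $L_1\neq L_2$ satisfy $L_1\vee L_2=K$, then $\operatorname{tr.deg}(L_1\cap L_2/F)=2+2-3=1$, so whenever two distinct lines of the plane meet, their meet is automatically an atom; dually the join of two distinct atoms is automatically a line.

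For existence: the hypothesis $\HP(y,x,z,p,q,r,s)$ provides four points $p,q,r,s$ no three of which are collinear, hence any three of them form a transcendence basis of $K$ over $F$; fixing generators turns these into a projective coordinate system. Running the usual chain of joins and meets that builds the fourth harmonic point — each join producing a line, and each of the meets being the meet of two lines that span the plane, hence an atom — yields a point $x'$ on the line through $y$ and $z$, described by an explicit rational function of the coordinates; normalizing $y,z,x$ to $0,\infty,1$ gives $x'$ with coordinate $-1$, i.e.\ the cross-ratio $(y,z;x,x')=-1$. The subcase in which the dashed collinearity $p,x,r$ holds is a degenerate specialization handled by the same computation.

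The main obstacle is the uniqueness clause: one must show that for \emph{every} configuration of the form in Figure~\ref{f2} the points $p',r'$ are collinear with the single point $x'$ just constructed. I would argue this directly in coordinates. Such a configuration imposes a prescribed list of collinearities, which translate into polynomial equations over $F$ among generators of the points involved; because $F$ is algebraically closed these equations can be solved, and the content of the argument is that they force the coordinate of the point of the line $yz$ collinear with $p'$ and $r'$ to satisfy $(y,z;x,\,\cdot\,)=-1$, whose unique solution is the coordinate of $x'$. The delicate point is the bookkeeping of non-degeneracy — no forced coincidence among $y,z,x$, no vanishing denominators in the elimination — which is exactly what the hypothesis that Figures~\ref{f1} and~\ref{f2} depict honest configurations guarantees; granting that, the computation pins $x'$ down and the lemma follows. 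This is essentially Lindstr\"om's argument from \cite{lhc}, reorganized.
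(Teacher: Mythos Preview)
The paper does not prove this lemma; it simply restates Lindstr\"om's result and cites \cite{lhc}, so there is no in-house argument to compare against. Your sketch is in the right spirit but contains a genuine gap. You assert that submodularity of transcendence degree gives $\operatorname{tr.deg}_F(L_1\cap L_2)=2+2-3=1$ for two distinct lines spanning the plane, and hence that each meet in the von~Staudt chain is automatically an atom. Semimodularity only yields $\le 1$; equality would be modularity, which the lattice of algebraically closed intermediate fields does \emph{not} enjoy. For instance, with $K=\overline{F(x,y,z)}$ in characteristic~$0$, the lines $\overline{F(x,y)}$ and $\overline{F(z,\,x+yz)}$ meet only in $F$: the derivations $\partial_z$ and $\partial_y-z\,\partial_x$ kill the two lines respectively, their Lie bracket is $-\partial_x$, so anything in the intersection is annihilated by all three partials and hence lies in $F$. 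Thus one cannot run an abstract join--meet construction in $\A(K/F)$ and be guaranteed a point at each step, and for the same reason there is no ``projective coordinate system'' or cross-ratio on $\A(K/F)$ to normalize $y,z,x$ to $0,\infty,1$. (The paper itself remarks in the introduction that a projective plane inside a full algebraic matroid is something one \emph{constructs} via harmonic conjugation --- which already presupposes the lemma.)

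What Lindstr\"om actually does, and what the second half of your outline gestures toward, is to bypass meets entirely: choose transcendentals generating each of the seven atoms, record the polynomial relations encoding the collinearities of Figure~\ref{f1}, then \emph{exhibit} an explicit element $\xi'$ of the join $y\vee z$ (so that $x':=\overline{F(\xi')}$ is an atom on that line by construction, not by intersecting anything), and finally verify by direct elimination that for any $p',r'$ coming from a configuration as in Figure~\ref{f2} the chosen generators of $p',r'$ together with $\xi'$ are algebraically dependent. Existence of $x'$ comes from writing it down; uniqueness is the elimination. Your plan becomes a valid proof once rewritten in that form; as written, the clause ``hence an atom'' is where it breaks.
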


Now we will define the matroids  $M(n)$ for $n \ge 2$. $M(n)$  will
be a simple matroid  of rank 3. We need only specify the lines of
size of at least three. Any set of two points not belonging to the
same dependent line is also a line. The points are denoted by:
$$a_0,b_0,a_1,b_1,\dots , a_{n-1},b_{n-1}, c_0,c_1,d.$$
The dependent lines are
$$ \{ a_0,a_1, \dots , a_{n-1}, d \},\{ b_0,b_1, \dots ,b_{n-1}, d \}, \{ c_0,c_1,d \},\{ a_i,b_{i},c_0 \},\{ a_i,b_{i+1},c_1 \},$$
for $i=0,1, \dots, n-1$,  where  $b_n = b_0$.

The matroid $M(4)$ is depicted in Figure  \ref{f3}.

\begin{figure} [htbp]
 \centering
\includegraphics[scale=1]{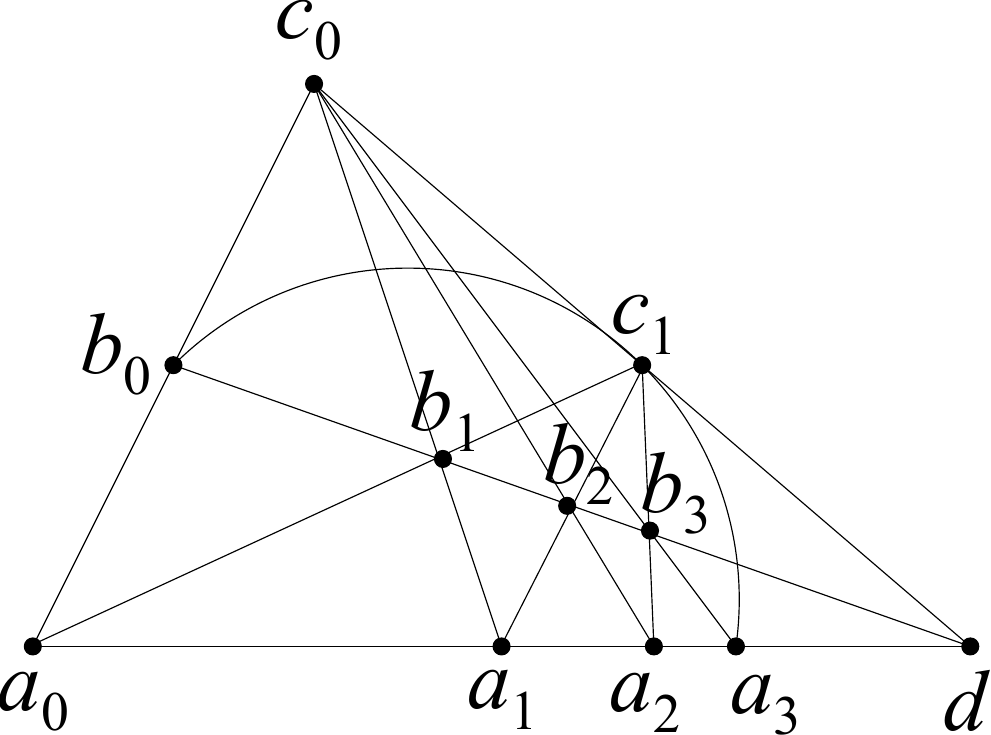}
\caption{ $M(4)$ } \label {f3}
\end{figure}

The Fano matroid (or Fano configuration) is the matroid in the
Figure \ref{f2} when $x=x'$. Note that $M(2)$ is the Fano matroid.
Lindstr\"om proved in \cite{lac} that the Fano matroid is
algebraically representable only in a field of characteristic 2. So
this proves Theorem \ref{main} for $n=2$.

Zaslavsky introduced in \cite {b3} the complete lift matroid for  a
group expansion graph. A \emph {group expansion graph }
$\mathfrak{G} K_3$ consists of a group $\mathfrak{G}$ and a \emph
{gain mapping} $\phi: \mathfrak{G} \times E \rightarrow
\mathfrak{G}$, from the edges of the underlying graph $\|
\mathfrak{G} K_3 \| = (V, \mathfrak{G} \times E)$ into a group
$\mathfrak{G}$, where $E = \{ e_1, e_2, e_3 \}$ is the set of edges
of the complete graph $K_3$. An edge $(g,e)$ has the same vertices
as $e$. One arbitrarily chooses an orientation of $e$, say
$(e;v,w)$, to define the gain $\phi ((g,e;v,w)) = g$; then the gain
of the reversed edge will be $\phi ((g,e;w,v)) = (\phi
(g,e;v,w))^{-1} = g^{-1}$. We call a circle (the edge set of a
simple closed path) \emph{balanced} if its edges' gains taken in
circular order multiply to the identity and \emph{unbalanced} if
otherwise (for reference and notations see \cite{b3} page 43 and
examples 3.6, 5.8). This gain graph gives rise to a matroid called
the \emph {complete lift matroid}, $L_0 (\mathfrak{G} K_3)$, where
the ground set is formed by the edges of $\mathfrak{G} K_3$ and an
extra point $D$. A subset of  $L_0 (\mathfrak{G} K_3)$ is dependent
if and only if it contains a balanced  circle, or two unbalanced
circles, or $D$ and an unbalanced circle.

An explicit description of the complete lift matroid $L_0(\mathbb
Z_n K_3)$  for a group expansion graph ${ \mathbb Z_n} K_3$, where $
n \ge 2$, is given as follows. Let $e_1, e_2, e_3$ be the edges of
$K_3$, oriented so that head$(e_1) = $ tail$(e_3)$ and head$(e_2) =
$ head$(e_3)$, and let $A_i := (i, e_1)$,  $B_i:=(i, e_2)$, $C_i:=
(i, e_3)$ be the edges of the graph $ ||{\mathbb Z_n} K_3||$ for $i
= 0, 1, \dots , n-1$, such that the gains are given by: $\phi (A_i)
= i$,  $\phi (B_i) = i$, $\phi (C_i) = i$ for $i = 0, 1, \dots ,
n-1$. So, the dependent lines of the complete lift matroid
$L_0(\mathbb Z_n K_3)$ are given by
$$\{ A_0,A_1, \dots , A_{n-1}, D \},\{ B_0,B_1, \dots , B_{n-1}, D \},\{ C_0,C_1,\dots , C_{n-1}, D \},\{ A_i,B_{i+j},C_j \},$$
where  $i,j = 0,1, \dots, n-1$ and $i+j$ is  modulo $n$. Any set of
two points not belonging to the same dependent hyperplane is also a
line.

The matroid $L_0(\mathbb Z_3 K_3)$ is depicted in Figure \ref{f4}.

\begin{figure} [htbp]
 \centering
\includegraphics[scale=1]{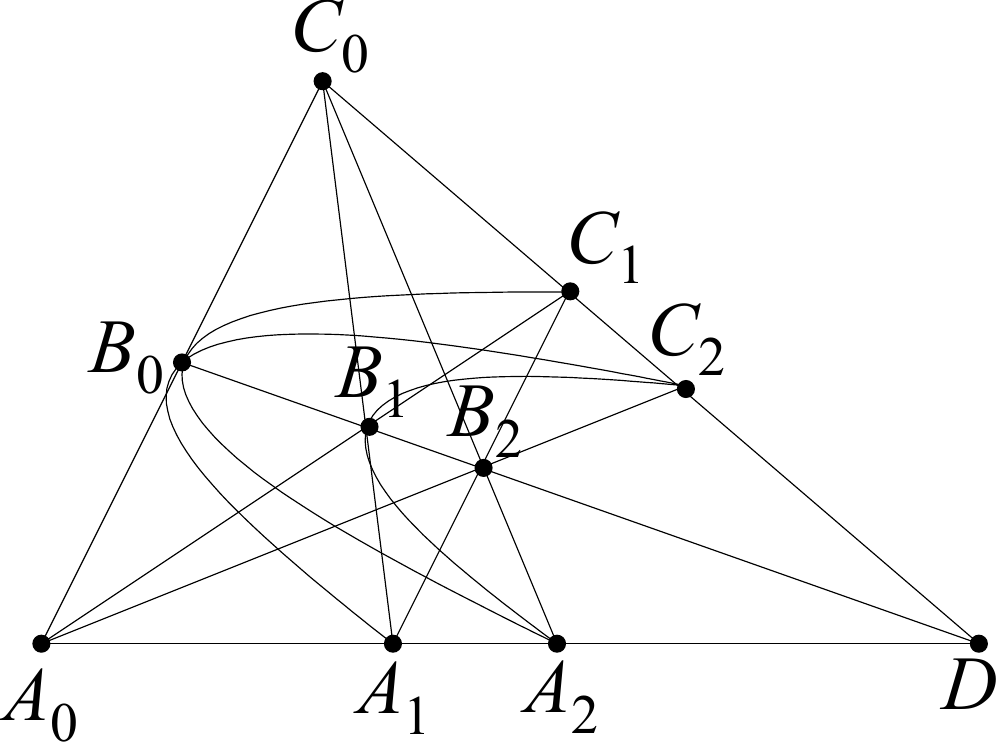}
\caption{ $L_0(\mathbb Z_3 K_3) $} \label{f4}
\end{figure}

\section {Embedding in harmonic matroids}

In this section we give a characterization for embeddability  of
$L_0 (\mathfrak{G} K_3)$ in a harmonic matroid $H$. We will use this
characterization and the fact that $M(n)$ is submatroid of $L_0
({\mathbb Z_n} K_3)$, to show when  $M(n)$ is or is not embeddable
in a harmonic matroid. This allows to say when $M(n)$ is or is not
algebraically representable.

\begin{lem} \label{nonembedding} If ${\mathbb Z_{p^n}}$ is a subgroup
of $\mathfrak{G}$, where $n \geq 2 $, then $L_0 (\mathfrak{G} K_3)$
does not embed in any harmonic matroid $H$.
\end{lem}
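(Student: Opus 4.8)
The plan is to work inside a hypothetical embedding of $L_0(\mathfrak{G}K_3)$ into a harmonic matroid $H$ and derive a contradiction by exploiting the uniqueness of harmonic conjugates. First I would set up notation: identify the images in $H$ of the points $D$, $A_i$, $B_i$, $C_i$ (for the subgroup $\mathbb{Z}_{p^n}\le\mathfrak{G}$, it suffices to look at the indices lying in this cyclic subgroup, since the relevant dependent lines $\{A_i,B_{i+j},C_j\}$ restrict to it). The key structural observation is that the three ``long'' lines through $D$ — namely $\{A_0,\dots,D\}$, $\{B_0,\dots,D\}$, $\{C_0,\dots,D\}$ — together with the triangle lines $\{A_i,B_{i+j},C_j\}$ set up exactly the projective configuration of Figure~\ref{f1}, so that each point $A_{k}$ (say) arises from a chain of harmonic conjugations starting from $A_0$ and $A_1$. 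Concretely, I expect that $A_2$ is forced to be the harmonic conjugate of $A_0$ with respect to some pair, $A_3$ the harmonic conjugate built from $A_1,A_2$, and so on — i.e. the additive structure of $\mathbb{Z}_{p^n}$ on the indices is reproduced by iterated harmonic conjugation along the line $\{A_i:i\}\cup\{D\}$.

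Next I would make precise the ``doubling'' or ``successor'' relation. In a harmonic matroid, once the configuration $\HP(y,x,z,p,q,r,s)$ of Figure~\ref{f1} appears, the conjugate point $x'$ exists and is unique (this is the defining property, together with Lemma~\ref{little}/Lemma~\ref{lindstrom} guaranteeing we are in a setting where it behaves well). So I would show that the incidences forced by the dependent lines of $L_0(\mathbb{Z}_{p^n}K_3)$ realize, for each $i$, a configuration whose harmonic conjugate must be the point indexed $2i$ (or $i+1$, depending on how the triangle lines chain together). Iterating, the point indexed $p^{n-1}\cdot(\text{something})$ gets identified via harmonic conjugation with a point that, in $\mathbb{Z}_{p^n}$, is \emph{not} zero but which the geometry forces to coincide with the apex $C_0$ or with $D$ — reflecting the fact that in $\mathbb{Z}_{p^n}$ the element $p^{n-1}$ has order $p$, not order $1$, so a relation that ``should'' close up after one step actually closes up only after $p$ steps, and the harmonic-conjugate chain cannot see this. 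The contradiction is then: uniqueness of the harmonic conjugate forces two distinct points of $L_0(\mathbb{Z}_{p^n}K_3)$ (distinct because the matroid is simple and they have different indices mod $p^n$) to be the same point of $H$.

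The main obstacle, I expect, is bookkeeping the exact correspondence between index arithmetic in $\mathbb{Z}_{p^n}$ and the combinatorics of iterated harmonic conjugation — in particular making sure that at each stage the seven points really do form a copy of Figure~\ref{f1} as a submatroid (one must check the right collinearities hold and the right non-collinearities hold, e.g. the dashed line in Figure~\ref{f1} is genuinely optional), and that the uniqueness clause applies. A secondary subtlety is handling the role of the extra point $D$ and the fact that the $C_i$'s and $A_i$'s live on different long lines, so the harmonic conjugation that transports structure along one long line must be set up using a triangle whose third vertex is on another long line. I would isolate the single ``one-step'' lemma — given the configuration forcing the index-$i$ point, harmonic conjugation produces the index-$(i+1)$ point (or index-$2i$ point) — prove it once carefully, and then the non-embedding follows by induction on the exponent: embedding $L_0(\mathbb{Z}_{p^n}K_3)$ would force, after the appropriate number of conjugation steps, a collapse of two points, contradicting that $H$ is a matroid in which $L_0(\mathbb{Z}_{p^n}K_3)$ embeds.
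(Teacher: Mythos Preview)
Your overall architecture matches the paper's: restrict to the $\mathbb{Z}_{p^n}$ submatroid, run an induction via harmonic conjugation, and extract a contradiction from the fact that $p\cdot p^{n-1}\equiv 0$ while $p^{n-1}\not\equiv 0$. Where your sketch diverges from the paper---and where I think it would stall---is in \emph{which} points get produced by the conjugation.

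You propose that harmonic conjugation walks along the long line $\{A_i\}\cup\{D\}$, producing $A_{i+1}$ or $A_{2i}$ from $A_i$. But those points are already in the embedded matroid, so recovering them by conjugation imposes no new constraint; and neither the successor map nor the doubling map on $\mathbb{Z}_{p^n}$ ever hits $0$ from a nonzero start (doubling would work only for $p=2$). So the ``one-step lemma'' you isolate does not by itself force a collapse.

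The paper's induction instead manufactures a sequence of \emph{new} points $x_3,x_4,\dots,x_{p+1}$ lying on the line $A_0B_0C_0$, not on any of the three long lines. The defining property of $x_{t+2}$ is that $\{A_i,\,B_{(t+1)i},\,C_{ti},\,x_{t+2}\}$ is collinear for every $i$; in effect $x_{t+2}$ records the ``slope-$t$'' pencil. The base case comes from $\HP(A_0,B_0,C_0,A_i,D,C_i,B_i)$, and the inductive step builds $x_{t+3}$ as the harmonic conjugate of $x_{t+1}$ with respect to $x_{t+2}$ and $A_0$, via
\[
\HP\bigl(A_0,\,x_{t+1},\,x_{t+2},\,C_{(t+1)i},\,B_{(t+1)i},\,A_i,\,A_{((t+1)/t)i}\bigr),
\]
which needs $t$ invertible mod $p^n$---fine for $t\le p-1$. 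After $p-1$ steps one has $x_{p+1}$ with $\{A_i,B_{pi},C_{(p-1)i},x_{p+1}\}$ collinear; plugging in $i=p^{n-1}$ gives $B_{pi}=B_0$, and since $x_{p+1}$ also lies on $A_0B_0$, one concludes $A_0,A_{p^{n-1}},B_0$ collinear, the desired contradiction. So the induction runs only $p-1$ steps and lives on the transversal line $A_0B_0C_0$, not along the $A$-line; that is the piece your sketch is missing.
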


\begin{proof} Assume $L_0 (\mathfrak{G} K_3)$ embeds in some
harmonic matroid $H$. $L_0 ({\mathbb Z_{p^n}} K_3)$ is a submatroid
of $L_0 (\mathfrak{G} K_3)$, because  ${\mathbb Z_{p^n}}$ is a
subgroup of $\mathfrak{G}$. The subscripts will be calculated modulo
$p^n$.

For $t \in \{ 1, \dots , p -1 \}$, let $P(t)$ be the statement:
there  exists a point $ x_{t+2} \in H$, such that, for all  $i = 0,
1, \dots , p^n -1$, $ \{ A_i, B_{(t+1)i}, C_{t i}, x_{t+2} \}$ are
sets of collinear points, and also $ A_0,B_0,C_0,x_{t+2}$ are
collinear points.

First we prove $P(1)$. Since $\HP(A_0, B_0, C_0, A_i, D, C_i, B_i) $
holds for $ i=1, \dots , p^n -1 $, then there exists a point $x_3$
in $H$ (the harmonic conjugate of $ B_0 $ with respect to $A_0 $ and
$ C_0 $), such that $ \{A_i,C_i,x_3 \} $ are sets of collinear
points for $i= 1, \dots ,p^n -1$. This and the definition of $L_0
({\mathbb Z_{p^n}} K_3)$ imply that $\{A_i,B_{2i},C_i,x_3 \}$ are
sets of collinear points. This proves that $P(1)$ is true.

Now we suppose that for a fixed  $t \in \{ 1, \dots , p -2 \}$, and
for  all  $s = 1, \dots , t$, $P(s)$ is true, and we deduce
$P(t+1)$.

From $P(t-1)$ we can see that,  for all $j=0, 1, \dots , p^n -1$, $
\{ A_{j}, B_{tj}, C_{(t-1)j},x_{t+1} \}$ are sets of collinear
points. And from  $P(t)$ we can see that, for all $j=0, 1, \dots ,
p^n -1 $, $ \{ A_{j}, B_{(t+1)j}, C_{tj},x_{t+2} \}$ are sets of
collinear points. So, taking $j = ((t+1)/t)i $, the sets $\{
A_{((t+1)/t)i},  C_{(t+1)i} , x_{t+2} \}$,  $\{ A_{((t+1)/t)i},
B_{(t+1)i}, x_{t+1} \}$ are formed by collinear points. This, the
inductive hypothesis, and the definition of  $L_0 ({\mathbb Z_{p^n}}
K_3)$ imply that
$$ \HP(A_0, x_{t+1}, x_{t+2}, C_{(t+1)i}, B_{(t+1)i}, A_i, A_{((t+1)/t)i} )$$
holds, for $ i=1, \dots , p^n -1 $. Therefore, we obtain a point
$x_{t+3}$,  the harmonic conjugate of $x_{t+1}$ with respect to
$x_{t+2}$ and $A_0$, such that  $\{A_i,B_{(t+2) i},C_{(t+1)
i},x_{t+3} \}$ are sets of collinear points and
$\{A_0,B_0,x_{t+2},x_{t+3} \}$ is a set of collinear points.  This
proves that $P(t+1)$ is true.

In particular, $P(p-1)$ provides that $\{A_i,B_{p i},C_{(p-1)
i},x_{p+1} \}$ are sets of collinear points for $i =1, 2, \dots, p^n
-1$. Now,  by taking $i=p^{n-1}$, the points
$A_{p^{n-1}},B_0,C_{(p-1) p^{n-1}},x_{p+1} $ are collinear (because
$ p^n \equiv 0 \text { modulo } p^n$). And using the fact that $
A_0,B_0,x_{p+1}$ are collinear points, we get that $A_0,
A_{p^n-1},B_0$ are collinear, which is a contradiction by definition
of $L_0 ({\mathbb Z_{p^n}} K_3)$.
\end{proof}

\begin{lem} \label{nonembedding2} If  ${\mathbb Z_p}$ and
${\mathbb Z_q}$ are subgroups of $\mathfrak{G}$, where $p < q$ are
primes, then $L_0 (\mathfrak{G} K_3)$ is not embeddable in any
harmonic matroid $H$.
\end{lem}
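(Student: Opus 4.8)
The plan is to imitate the proof of Lemma~\ref{nonembedding}, but to run only $p-1$ harmonic‑conjugation steps (using the \emph{smaller} prime $p$ as the step count) and then to exploit the fact that $q$ is a zero divisor modulo $pq$. Assume for contradiction that $L_0(\mathfrak G K_3)$ embeds in a harmonic matroid $H$. Since $\mathbb Z_p$ and $\mathbb Z_q$ are subgroups of $\mathfrak G$ and $p,q$ are distinct primes, $\mathfrak G$ contains a subgroup isomorphic to $\mathbb Z_{pq}$, so $L_0(\mathbb Z_{pq}K_3)$ is a submatroid of $L_0(\mathfrak G K_3)$ and hence is embedded in $H$. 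From now on all subscripts are read modulo $pq$.

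For $t\in\{1,\dots,p-1\}$ let $P(t)$ be the statement used in Lemma~\ref{nonembedding}: there is a point $x_{t+2}\in H$ such that $\{A_i,B_{(t+1)i},C_{ti},x_{t+2}\}$ is a set of collinear points for every $i$, and $A_0,B_0,C_0,x_{t+2}$ are collinear. I would prove $P(1)$ exactly as there, taking $x_3$ to be the harmonic conjugate of $B_0$ with respect to $A_0$ and $C_0$ (which exists and behaves correctly because $\HP(A_0,B_0,C_0,A_i,D,C_i,B_i)$ holds for all $i\neq 0$ and $\{A_i,B_{2i},C_i\}$ is a line of $L_0(\mathbb Z_{pq}K_3)$), and then deduce $P(t+1)$ from $P(t-1)$ and $P(t)$ for $t=1,\dots,p-2$ by the same harmonic‑conjugation argument. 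The crucial observation is that the only division occurring in the inductive step is division by $t$ with $t\le p-2$; such a $t$ is strictly smaller than both $p$ and $q$, hence invertible modulo $pq$, so the construction runs without obstruction all the way up to $P(p-1)$. (If $p=2$ there is nothing to iterate and $P(p-1)=P(1)$ is the base case.)

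From $P(p-1)$ I would extract the contradiction as follows. Apply $P(p-1)$ with $i=q$: since $pq\equiv 0\pmod{pq}$, the set $\{A_q,B_0,C_{(p-1)q},x_{p+1}\}$ is collinear, so $x_{p+1}$ lies on the line of $H$ through $A_q$ and $B_0$; it also lies on the line through $A_0$ and $B_0$ because $A_0,B_0,x_{p+1}$ are collinear. These two lines of $H$ are distinct, because $A_0,A_q,B_0$ is independent in $L_0(\mathbb Z_{pq}K_3)$ (the points $A_0,A_q$ lie on the $A$‑line, which does not contain $B_0$), and their rank is preserved by the embedding; hence the two lines meet only in $B_0$, forcing $x_{p+1}=B_0$. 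Now apply $P(p-1)$ again with $i=1$: we obtain that $\{A_1,B_p,C_{p-1},B_0\}$ is collinear in $H$. But $B_0\neq B_p$ because $p\not\equiv 0\pmod{pq}$, so $B_0$ does not lie on the line $\{A_1,B_p,C_{p-1}\}$ of $L_0(\mathbb Z_{pq}K_3)$; thus $\{A_1,B_p,C_{p-1},B_0\}$ has rank $3$ there, and this rank is preserved in $H$ — contradicting the collinearity just obtained.

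The main obstacle I anticipate is the routine but delicate bookkeeping, identical in spirit to Lemma~\ref{nonembedding}: verifying that each auxiliary configuration $\HP(\cdots)$ invoked to apply harmonicity genuinely occurs inside the image of $L_0(\mathbb Z_{pq}K_3)$, and that the passage $P(t-1),P(t)\Rightarrow P(t+1)$ goes through verbatim — in particular that every index produced (such as $((t+1)/t)i$) is legitimate, which is precisely the place where one uses $t\le p-2<q$. A secondary point requiring care is the step concluding $x_{p+1}=B_0$: one should note that the embedding may be taken into a simple harmonic matroid, so that two distinct lines of $H$ through $B_0$ meet in exactly the point $B_0$; combined with preservation of rank under embedding, this is what upgrades ``$x_{p+1}$ lies on two lines'' first to the equality $x_{p+1}=B_0$ and then to the contradiction.
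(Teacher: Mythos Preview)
Your inductive harmonic-conjugation argument and your endgame are essentially the same as the paper's, but there is one genuine gap: the reduction to $L_0(\mathbb Z_{pq}K_3)$ rests on the claim that $\mathfrak G$ contains a copy of $\mathbb Z_{pq}$ whenever it contains $\mathbb Z_p$ and $\mathbb Z_q$ for distinct primes $p,q$. That is false without an abelian hypothesis --- $S_3$ contains $\mathbb Z_2$ and $\mathbb Z_3$ but no $\mathbb Z_6$ --- and the lemma as stated makes no such hypothesis (indeed it is later invoked in Theorem~\ref{embedfg} for arbitrary finite groups).

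The paper avoids this by never merging the two primes into one modulus. It keeps $L_0(\mathbb Z_pK_3)$ and $L_0(\mathbb Z_qK_3)$ as two separate submatroids of $L_0(\mathfrak G K_3)$, overlapping only in the three identity-gain edges $a_0,b_0,c_0$, and runs the induction $P(t)$ simultaneously in both (lowercase indices read mod~$p$, uppercase mod~$q$), using uniqueness of the harmonic conjugate to ensure the \emph{same} point $x_{t+2}$ serves both families. The endgame then takes $j=1$ in the $\mathbb Z_p$ copy (where $b_p=b_0$) to force $x_{p+1}=b_0$, and $i=1$ in the $\mathbb Z_q$ copy to reach the contradiction --- precisely your two final moves, just executed in the two separate submatroids rather than at $i=q$ and $i=1$ inside $\mathbb Z_{pq}$. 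Your version is therefore a correct and slightly cleaner variant once one adds the hypothesis that $\mathfrak G$ is abelian (so that the $\mathbb Z_{pq}$ claim holds), but as written it does not establish the lemma in the generality stated.
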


\begin{proof}
Since ${\mathbb Z_p}$ and ${\mathbb Z_q}$ are subgroups  of
$\mathfrak{G} $, then by definition of a group expansion graph, we
can see that ${\mathbb Z_p}K_3 $ and ${\mathbb Z_q} K_3$ are
subgraphs of  $\mathfrak{G} K_3$, with three edges in common (the
edges whose gains are the identity of the group). Therefore $L_0
({\mathbb Z_p} K_3)$ and $L_0 ({\mathbb Z_q} K_3)$ are submatroids
of  $L_0 (\mathfrak{G} K_3)$. Let 
$$E_1 = \{a_0,  b_0, c_0, a_1, b_1, c_1, \dots , a_{p-1}, b_{p-1}, c_{p-1} \}$$ and  
$$E_2 = \{a_0,  b_0, c_0,  A_1, B_1,  C_1, \dots , A_{q-1}, B_{q-1}, C_{q-1}\}$$ 
be two subsets of the edges of  $\mathfrak{G} K_3$, such that
$E_1$ and $E_2$ correspond with the edges of ${\mathbb Z_p} K_3$ and
${\mathbb Z_q} K_3$ respectively, where 
$$\{a_0, a_1, \dots, a_p, A_1, \dots , A_{q-1}\}, \{b_0, b_1, \dots , b_p, B_1, \dots
, B_{q-1}\} \text{ and } \{c_0, c_1, \dots , c_p, C_1, \dots , C_{q-1}\}$$
are sets of parallel edges. Suppose that the gains are $\phi (a_j) =
j$,  $\phi (b_j) = j$, $\phi (c_j) = j$,  $\phi (A_i) = i$,  $\phi
(B_i) = i$, $\phi (C_i) = i$.

Suppose that $L_0 (\mathfrak{G} K_3)$ embeds in some harmonic
matroid $H$.  The subscripts with letter $j$ will be calculated
modulo $p$ and the subscripts with letter $i$ will be calculated
modulo $q$.

For $t \in \{ 1, \dots , p -1 \}$, let $P(t)$ be  the statement:
there exists a point $ x_{t+2} \in H$, such that, for all $i \in \{
1, \dots , q -1 \}$ and for all $j \in \{ 0, 1, \dots , p -1 \}$,
the following sets are formed by collinear points: $ \{ A_i,
B_{(t+1)i}, C_{t i}, x_{t+2} \}$, $ \{ a_j, b_{(t+1)j}, c_{t j},
x_{t+2} \}$, and  also $ a_0,b_0,c_0, x_{t+2}$ are collinear points.

First we prove $P(1)$. Since $\HP(a_0, b_0, c_0, A_i, D, C_i, B_i) $
holds for $ i=1, \dots , q-1 $, there exists a point $x_3$ in $H$,
the harmonic conjugate of $ b_0 $ with respect to $a_0 $ and $ c_0
$, such that $ A_i,C_i,x_3 $ are collinear points for $i= 1, \dots
,q-1$. This and the definition of $L_0 ({\mathbb Z_q} K_3)$ imply
that $\{A_i,B_{2i},C_i,x_3 \}$ are sets of collinear points. Since
the harmonic conjugate is unique and  $\HP(a_0, b_0, c_0, a_j, D,
c_j, b_j) $ holds for $ j=1, \dots , p-1 $, then $\{a_j,c_j,x_3 \} $
are sets of collinear points. This proves that $P(1)$ is true.

Now we suppose that for a fixed  $t \in \{ 2, \dots , p -2 \}$, and
for all  $s = 1, \dots , t$, $P(s)$ is true, and we deduce
$P(t+1)$.

From $P(t-1)$ we can see that, for all $r=1, \dots , q-1$, $\{
A_{r}, B_{tr}, C_{(t-1)r}, x_{t+1} \}$  are sets of collinear
points. And from $P(t)$ we can see that, for all $r=1, \dots , q-1$,
$\{ A_{r},B_{(t+1)r}, C_{tr},x_{t+2} \}$ are sets of collinear
points. So, taking $r = ((t+1)/t)i $, the sets  $\{ A_{((t+1)/t)i},
B_{(t+1)i}, x_{t+1} \}$, $\{ A_{((t+1)/t)i}, C_{(t+1)i}, x_{t+2} \}$
are formed by collinear points. This, the inductive hypothesis, and
the definition of $L_0 ({\mathbb Z_q} K_3)$ imply that
$$ \HP(a_0, x_{t+1}, x_{t+2}, C_{(t+1)i}, B_{(t+1)i}, A_i, A_{((t+1)/t)i} )$$
holds, for $i = 1, \dots , q-1$. Therefore, we obtain a point
$x_{t+3}$, the harmonic conjugate  of $x_{t+1}$ with respect to
$x_{t+2}$ and $a_0$, such that $\{A_i,B_{(t+2) i},C_{(t+1)
i},x_{t+3} \}$ are sets of collinear points.

Similarly, from $P(t-1)$ we can see that, for all $k=1, \dots ,
p-1$,  $\{ a_{k}, b_{tk},c_{(t-1)k},x_{t+1} \}$ are sets of
collinear points. And from $P(t)$ we can see that, for all $k=1,
\dots , p-1$, $ \{ a_{k}, b_{(t+1)k}, c_{tk},x_{t+2} \}$ are sets of
collinear points. So, taking $k = ((t+1)/t)j $, the sets  $\{
a_{((t+1)/t)j}, b_{(t+1)j}, x_{t+1} \}$, $\{ a_{((t+1)/t)j},
c_{(t+1)j}, x_{t+2} \}$ are formed by collinear points. This, the
inductive hypothesis, and the definition of $L_0 ({\mathbb Z_p}
K_3)$ imply that
$$ \HP(a_0, x_{t+1}, x_{t+2}, c_{(t+1)j}, b_{(t+1)j}, a_j, a_{((t+1)/t)j} )$$
holds for $ j=1, \dots , p -1$. Then $\{a_j,b_{(t+2) j},c_{(t+1)
j},x_{t+3} \}$  are collinear points (because the harmonic conjugate
is unique). This proves that $P(t+1)$ is true.

In particular, $P(p-1)$ provides that $\{
A_i,B_{pi},C_{(p-1)i},x_{p+1} \}$,  $\{a_j,b_{pj},c_{(p-1)j},x_{p+1}
\}$ are sets of collinear points, for $j= 1, \dots , p-1$ and $i= 1,
\dots , q-1$. Taking $i=j = 1$, $\{ A_1,B_{p},C_{(p-1)},x_{p+1} \}$,
$\{a_1,b_{p},c_{(p-1)},x_{p+1} \}$ are sets of collinear points.
Here $ b_p = b_0 $, so $ x_{p+1}=  b_0 $. This implies that
$A_1,B_{p},C_{(p-1)},b_0$ are collinear points, which is a
contradiction by definition of $L_0 (\mathfrak{G} K_3)$.
\end{proof}

\begin{thm} \label{L0embedding} If $\mathfrak{G}$ is a
finite abelian group, then the following statements are equivalent.

(a) $L_0 (\mathfrak{G} K_3)$ embeds in some harmonic matroid.

(b) $\mathfrak{G} = ({\mathbb Z_p})^n$, where $p$ is a prime  number
and $n \geq 0$.

(c) $L_0 (\mathfrak{G} K_3)$ is algebraically representable.
\end{thm}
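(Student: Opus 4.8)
The plan is to prove the cycle of implications $(a)\Rightarrow(b)$, $(b)\Rightarrow(c)$, and $(c)\Rightarrow(a)$, using the two non-embedding lemmas already established together with a structure theorem for finite abelian groups and the known algebraic representability of $L_0(\mathbb{Z}_p K_3)$ in characteristic $p$.

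For $(a)\Rightarrow(b)$: suppose $L_0(\mathfrak{G}K_3)$ embeds in a harmonic matroid $H$. By the fundamental theorem of finite abelian groups, $\mathfrak{G}$ is a direct sum of cyclic groups of prime-power order. If some summand has order $p^n$ with $n\geq 2$, then $\mathbb{Z}_{p^n}$ is a subgroup of $\mathfrak{G}$, and Lemma~\ref{nonembedding} gives a contradiction. Hence every summand has prime order, so $\mathfrak{G}\cong \mathbb{Z}_{p_1}\oplus\cdots\oplus\mathbb{Z}_{p_k}$ with each $p_i$ prime. If two of these primes were distinct, say $p<q$, then both $\mathbb{Z}_p$ and $\mathbb{Z}_q$ would be subgroups of $\mathfrak{G}$, contradicting Lemma~\ref{nonembedding2}. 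Therefore all the $p_i$ are equal to a single prime $p$, and $\mathfrak{G}\cong(\mathbb{Z}_p)^n$ for some $n\geq 0$ (with $n=0$ the trivial group), which is exactly (b).

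For $(b)\Rightarrow(c)$: when $\mathfrak{G}=(\mathbb{Z}_p)^n$, we may regard $\mathfrak{G}$ as the additive group of the field $\mathbb{F}_{p^n}$ (or more simply as an $n$-dimensional $\mathbb{F}_p$-vector space sitting inside any field $F$ of characteristic $p$ that contains $\mathbb{F}_{p^n}$). One then writes down an explicit algebraic representation of $L_0(\mathfrak{G}K_3)$ over such an $F$: send the extra point $D$ and the three ``directions'' $e_1,e_2,e_3$ to suitably chosen transcendentals, and send the edge $(g,e_i)$ to a linear combination involving $g\in F$, exactly mirroring the standard vector/linear representation of $L_0(\mathfrak{G}K_3)$ (see \cite{b4}); the balanced-circle and two-unbalanced-circle dependencies translate into the additive relations $g_1+g_2 = g_3$ holding in $F$, which is where characteristic $p$ and the group structure of $\mathfrak{G}$ enter. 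This is the computational heart of the direction and should be spelled out; it is essentially the observation recorded in Corollary~\ref{submain}. Alternatively, one can cite the known fact that $L_0(\mathfrak{G}K_3)$ is linearly (hence algebraically) representable over $F$ precisely when $\mathfrak{G}$ embeds in the additive group of $F$.

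For $(c)\Rightarrow(a)$: this is immediate from Lemma~\ref{lindstrom}, since an algebraic representation of a matroid $M$ exhibits $M$ as a submatroid of a full algebraic matroid $\A(K/F)$, and a full algebraic matroid is a harmonic matroid; thus $L_0(\mathfrak{G}K_3)$ embeds in a harmonic matroid. The main obstacle is the $(b)\Rightarrow(c)$ step, where one must produce and verify a genuine algebraic (indeed linear) representation over a field of characteristic $p$; the other two implications are structural and follow quickly from the already-proved lemmas. I would also remark at the end that combining this theorem with the inclusion $M(n)\subseteq L_0(\mathbb{Z}_n K_3)$ and the fact that any algebraic representation of $M(n)$ extends to one of $L_0(\mathbb{Z}_n K_3)$ yields Theorem~\ref{main}: $M(n)$ is algebraically representable iff $\mathbb{Z}_n\cong(\mathbb{Z}_p)^k$, i.e.\ iff $n$ is prime.
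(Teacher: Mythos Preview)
Your proposal is correct and follows essentially the same route as the paper: the paper proves $(a)\Rightarrow(b)$ by contrapositive via Lemmas~\ref{nonembedding} and~\ref{nonembedding2} (you make the underlying structure-theorem argument explicit, which the paper leaves implicit), proves $(b)\Rightarrow(c)$ by citing Zaslavsky~\cite{b4} for linearity of $L_0((\mathbb{Z}_p)^n K_3)$ together with linear $\Rightarrow$ algebraic, and gets $(c)\Rightarrow(a)$ from Lemma~\ref{lindstrom}. One small caution: avoid pointing to Corollary~\ref{submain} in the $(b)\Rightarrow(c)$ step, since that corollary is derived from the present theorem; your alternative of citing \cite{b4} directly is exactly what the paper does and avoids circularity.
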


\begin{proof} If  $\mathfrak{G} \neq ({\mathbb Z_p})^n$, then by
Lemmas \ref {nonembedding} and  \ref {nonembedding2}, $L_0
(\mathfrak{G} K_3)$ does not embed in any harmonic matroid.

Zaslavsky in \cite{b4} proved that $L_0 ( ({\mathbb Z_p})^n K_3)$ is
linear over a field $F$, if $({\mathbb Z_p})^n$ is the additive
subgroup of $F$. Since every linear matroid over a field $F$ is
algebraic over $F$ \cite [Theorem 6.7.10] {oxley}, then $L_0 (
({\mathbb Z_p})^n K_3)$ is algebraically representable over $F$.

That (c) implies (a) follows from Lemma \ref {lindstrom}.
\end{proof}

\begin{thm} \label{embedfg} If $\mathfrak{G}$ is a finite group
such that $L_0 (\mathfrak{G} K_3)$ embeds in some harmonic matroid,
then  $\mathfrak{G}$ has exponent $p$ for some $p$ prime.
\end{thm}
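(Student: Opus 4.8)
The plan is to deduce this directly from Lemmas \ref{nonembedding} and \ref{nonembedding2}, together with two elementary facts about finite groups. Suppose $L_0(\mathfrak{G} K_3)$ embeds in some harmonic matroid $H$. The first thing to record is that if $\mathfrak{H}$ is any subgroup of $\mathfrak{G}$, then $\mathfrak{H} K_3$ is a subgraph of $\mathfrak{G} K_3$ sharing the three edges of identity gain, so $L_0(\mathfrak{H} K_3)$ is a submatroid of $L_0(\mathfrak{G} K_3)$ and hence also embeds in $H$; this uses only that $\mathfrak{H}$ is a subgroup, not a normal one, so nothing changes if $\mathfrak{G}$ is non-abelian. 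Now if two distinct primes $p < q$ both divide $|\mathfrak{G}|$, then by Cauchy's theorem $\mathfrak{G}$ contains subgroups isomorphic to $\mathbb{Z}_p$ and to $\mathbb{Z}_q$, and Lemma \ref{nonembedding2} says $L_0(\mathfrak{G} K_3)$ embeds in no harmonic matroid, a contradiction. Hence $|\mathfrak{G}|$ is a power of a single prime $p$ (the trivial case $\mathfrak{G} = \{e\}$ being immediate).

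It remains to show $\mathfrak{G}$ has exponent $p$, that is, that it contains no element of order $p^2$. If it did, say $g$ has order $p^m$ with $m \ge 2$, then $\langle g^{p^{m-2}} \rangle$ is a subgroup isomorphic to $\mathbb{Z}_{p^2}$, and Lemma \ref{nonembedding} (applied with the prime $p$ and $n = 2$) again shows $L_0(\mathfrak{G} K_3)$ embeds in no harmonic matroid, a contradiction. Therefore every element of $\mathfrak{G}$ has order $1$ or $p$, which is exactly the assertion that $\mathfrak{G}$ has exponent $p$.

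The main point to be careful about is not any single hard step — the genuine work was already done inside Lemmas \ref{nonembedding} and \ref{nonembedding2} — but rather checking that those lemmas really do apply to an arbitrary finite $\mathfrak{G}$: one must confirm that their proofs only ever use the relevant $\mathbb{Z}_{p^n}$ (respectively $\mathbb{Z}_p$ and $\mathbb{Z}_q$) as a \emph{subgroup} in order to realize $L_0(\mathbb{Z}_{p^n} K_3)$ (respectively $L_0(\mathbb{Z}_p K_3)$ and $L_0(\mathbb{Z}_q K_3)$) as submatroids, which is indeed how they are written. The only other thing worth a word is the degenerate boundary phrasing ``has exponent $p$ for some prime $p$'' when $\mathfrak{G}$ is trivial, where the statement holds vacuously.
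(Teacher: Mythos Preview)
Your argument is correct and is precisely the approach the paper takes: its proof of Theorem~\ref{embedfg} consists solely of the sentence ``The proof follows by Lemmas~\ref{nonembedding} and~\ref{nonembedding2},'' and you have simply spelled out the elementary group-theoretic steps (Cauchy's theorem, extracting a $\mathbb{Z}_{p^2}$ subgroup) that justify that citation. Your remarks about the lemmas only needing a subgroup, and about the trivial group, are accurate but not something the paper bothers to mention.
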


\begin{proof} The proof follows by Lemmas  \ref{nonembedding} and \ref{nonembedding2}.
\end{proof}

\emph {Question}: If $\mathfrak{G}$ is a  finite group with exponent
$p$, not isomorphic to $({\mathbb Z_p})^n$ for $n \geq 1$, is it
possible for  $L_0 (\mathfrak{G} K_3)$ to be embeddable in some
harmonic matroid?

Now we compare the embeddability of $M(n)$ with the  embeddability
of $ L_0 ({\mathbb Z_n} K_3)$.

\begin{lem} \label{mnembedding} Any embedding of $M(n)$ in a harmonic
matroid $H$ extends uniquely to an embedding of $L_0 ({\mathbb Z_n}
K_3)$ in $H$.
\end{lem}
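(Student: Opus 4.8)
The plan is to start from an embedding $\iota$ of $M(n)$ in a harmonic matroid $H$ and to produce the ``missing'' points of $L_0(\mathbb Z_n K_3)$ — namely, the point $D$ together with the points $C_2,\dots,C_{n-1}$ — as harmonic conjugates, exploiting the fact that the dependent lines already present in $M(n)$ force each new point to lie on a prescribed pencil of lines. Recall that $M(n)$ contains the points $a_0,\dots,a_{n-1},b_0,\dots,b_{n-1},c_0,c_1,d$ together with the lines $\{a_0,\dots,a_{n-1},d\}$, $\{b_0,\dots,b_{n-1},d\}$, $\{c_0,c_1,d\}$, $\{a_i,b_i,c_0\}$ and $\{a_i,b_{i+1},c_1\}$; whereas $L_0(\mathbb Z_n K_3)$ has in addition a point $D$ on the first three of these lines and points $C_2,\dots,C_{n-1}$ with $C_j$ on the lines $\{a_i,b_{i+j},C_j\}$ for all $i$ (after identifying $A_i=a_i$, $B_i=b_i$, $C_0=c_0$, $C_1=c_1$). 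The task is therefore to locate these points inside $H$ using harmonic conjugation and check that the resulting incidences are exactly those of $L_0(\mathbb Z_n K_3)$.

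The key steps, in order, are as follows. First, identify the point $D$: in $H$ the two lines $\overline{a_0 a_1}$ and $\overline{b_0 b_1}$ are distinct lines meeting $\overline{c_0 c_1}$, and I claim they all meet in a common point — this is exactly what $\HP$ configurations inside $M(n)$ provide, so that $D:=\iota(a_0a_1)\wedge\iota(b_0b_1)$ lies on all three long lines; uniqueness of $D$ is forced by the rank-$3$ requirement that two distinct lines meet in at most one point. Second, run an induction on $j$ mirroring the inductive scheme already used in the proof of Lemma \ref{nonembedding}: having constructed collinear points realizing $C_0,C_1,\dots,C_j$ with the correct incidences $\{a_i,b_{i+s},C_s,x_{s+?}\}$, observe that the configuration $\HP$ built from the lines through $a_0$, $C_{j-1}$ and $C_j$ holds in $H$ (the hypotheses being precisely the already-established collinearities), so the harmonic conjugate $C_{j+1}$ of $C_{j-1}$ with respect to $C_j$ and a suitable third point exists in $H$ and lies on $\{a_i,b_{i+j+1},C_{j+1}\}$ for every $i$. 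Third, verify that the map sending the formal generators $A_i,B_i,C_j,D$ of $L_0(\mathbb Z_n K_3)$ to these points of $H$ respects \emph{all} dependent lines — i.e.\ that no spurious collinearity is introduced and that the long lines $\{A_0,\dots,A_{n-1},D\}$ etc.\ close up — which follows from the line-incidence bookkeeping together with simplicity of $M(n)$ inside $H$. Finally, uniqueness of the extension is immediate: each new point is pinned down as the intersection of two already-determined lines of $H$ (equivalently, as a harmonic conjugate, which is unique in a harmonic matroid by Lemma \ref{lindstrom}/\ref{little} once Fano's axiom is available — or directly by rank $3$), so any two extensions agree point by point.

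The main obstacle I anticipate is the third step: checking that the harmonic conjugates produced really do realize $L_0(\mathbb Z_n K_3)$ \emph{faithfully}, with no collapse of points and no extra dependent line. In particular one must confirm that the long line containing all the $C_j$'s and $D$ genuinely appears and that $C_j\neq C_k$ for $j\neq k$; a degenerate embedding might in principle identify some of these points or create an unwanted flat, and ruling this out requires using the precise line structure of $M(n)$ (the two families $\{a_i,b_i,c_0\}$ and $\{a_i,b_{i+1},c_1\}$ with indices modulo $n$) rather than just the existence of harmonic conjugates. Once the incidence pattern is shown to match that of $L_0(\mathbb Z_n K_3)$ exactly, the extension is forced and unique, completing the proof.
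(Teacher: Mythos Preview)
Your approach is essentially the paper's: construct the missing $C_j$ inductively as harmonic conjugates, using the $\HP$ configurations already present in the image of $M(n)$, and invoke uniqueness of the harmonic conjugate for the uniqueness of the extension. The specific configuration the paper uses is $\HP(c_j,c_{j-1},D,a_i,b_{i+j},b_{i+j+1},a_{i+1})$, which yields $c_{j+1}$ as the harmonic conjugate of $c_{j-1}$ with respect to $c_j$ and $D$; this is your ``suitable third point''.

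There is one confusion worth flagging. You list $d$ among the points of $M(n)$ and correctly record the lines $\{a_0,\dots,a_{n-1},d\}$, $\{b_0,\dots,b_{n-1},d\}$, $\{c_0,c_1,d\}$, but then say $L_0(\mathbb Z_n K_3)$ has \emph{in addition} a point $D$ on those three lines and propose to construct $D$ as $\overline{a_0a_1}\wedge\overline{b_0b_1}$. In fact $D$ is exactly $d$: the identification is $A_i\leftrightarrow a_i$, $B_i\leftrightarrow b_i$, $C_0\leftrightarrow c_0$, $C_1\leftrightarrow c_1$, $D\leftrightarrow d$, so only $C_2,\dots,C_{n-1}$ are genuinely missing. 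Your first step therefore collapses (harmlessly) to the tautology $D=\iota(d)$. Once this is cleared up, your inductive step and the paper's coincide.

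Your caution about the third step is well placed: the paper dispatches the verification that the extension is faithful (no collapsed points, no spurious flats, and the long $C$-line closes up) with a single sentence, and a careful write-up would indeed need to check that the $c_j$ are distinct and that the only dependent lines created are those of $L_0(\mathbb Z_n K_3)$.
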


\begin{proof} Identifying $a_i, b_i, c_j, \text { and } d$ in $M(n)$
with $A_i$, $B_i$, $C_j$ and $ D$ in  $L_0 (\mathbb Z_n K_3)$,
respectively, for $i = 0, 1, \dots , n-1$ and $j= 0, 1$, we can see
that $M(n)$ is a submatroid of $L_0 ({\mathbb Z_n} K_3)$. The
subscripts will be calculated modulo $n$.

For all $j \in \{ 1, \dots , n-1 \}$, let $P(j)$ be the statement:
there exists  a point $ c_j \in H$, such that, for all  $i = 0, 1,
\dots , n -1$,  $a_i, b_{i+j}, c_j$ are collinear points, and  $c_0,
c_1 , c_j$ are collinear points.

By definition of $L_0 ({\mathbb Z_n} K_3)$, we can see  that $P(1)$
is true.

Now we suppose that for a fixed  $j \in \{ 1, \dots , n -2 \}$, and
for all  $s = 1, \dots , j$, $P(s)$ is true. This and the definition
of $M(n)$ imply that $ \HP(c_{j}, c_{j-1}, D, a_i, b_{i+j},
b_{i+j+1},a_{i+1})$ holds, for $i= 0, 1, \dots , n-1$. Therefore
there exists a point $c_{j+1}$ (the harmonic conjugate of  $c_{j-1}$
with respect to $c_j$ and $D$), such that  $a_i, b_{i+j+1}, c_{j+1}$
are collinear points. By uniqueness of the harmonic conjugate,
$a_i,b_{i+j},c_j $ are collinear points for $i,j=0,1, \dots n-1$.
This proves that $P(j+1)$ is true.

Let $M'(n)$ be the extension of $M(n)$ by $\{ c_2, \dots , c_{n-1}
\}$,  where the dependent lines are given by definition of $M(n)$
and by the above procedure.

Identify the $c_j$ in $M'(n)$ with  $C_j$ in  $L_0 ({\mathbb Z_n}
K_3)$,  for $j = 0,1, \dots , n-1$. We can see that the dependent
lines $\{a_i,b_{i+j},c_j \}$  and  $\{c_0,c_1, \dots , c_{n-1} \}$
are in one-to-one correspondence with the dependent lines
$\{A_i,B_{i+j},C_j \}$ and  $\{C_0,C_1, \dots , C_{n-1} \}$ in $L_0
({\mathbb Z_n} K_3)$, respectively, for  $i,j = 0,1, \dots , n-1$.
Now, it is easy to see the correspondence between all other flats of
$M'(n)$ and the flats of $L_0 ({\mathbb Z_n} K_3)$. So $M(n)$
extends to $L_0 ({\mathbb Z_n} K_3)$ in $H$.

The uniqueness holds because the harmonic conjugate is unique.
\end{proof}

Theorem \ref {gordon} and Lemma \ref {lindstrom} imply that $M(p)$
embeds in a harmonic matroid $H$ if $p$ is prime. As a natural
question we ask: Does $M(n)$ embed in a harmonic matroid, if $n$ is
composite? The answer to this question is stated formally in the
following theorem.

\begin{thm} \label{main1}
If $n$ is composite then $M(n)$ does not embed in any harmonic matroid.
\end{thm}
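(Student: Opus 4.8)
The plan is to reduce Theorem~\ref{main1} to the non-embedding results already proved for complete lift matroids. By Lemma~\ref{mnembedding}, any embedding of $M(n)$ in a harmonic matroid $H$ extends to an embedding of $L_0({\mathbb Z}_n K_3)$ in $H$. So it suffices to show that when $n$ is composite, $L_0({\mathbb Z}_n K_3)$ does not embed in any harmonic matroid. This follows from Theorem~\ref{L0embedding}: that theorem states that $L_0(\mathfrak{G}K_3)$ embeds in a harmonic matroid if and only if $\mathfrak{G} \cong ({\mathbb Z}_p)^n$ for some prime $p$ and $n \ge 0$. Since ${\mathbb Z}_n$ for composite $n$ is never of the form $({\mathbb Z}_p)^k$, we are done.

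First I would write the reduction cleanly: assume for contradiction that $M(n)$ embeds in a harmonic matroid $H$ with $n$ composite. By Lemma~\ref{mnembedding} this embedding extends to an embedding of $L_0({\mathbb Z}_n K_3)$ in $H$. Next I would observe that $n$ composite means $n$ is not a prime power, or $n = p^k$ with $k \ge 2$; in either case ${\mathbb Z}_n$ is not isomorphic to $({\mathbb Z}_p)^k$ for any prime $p$. (If $n$ has two distinct prime factors $p < q$, then ${\mathbb Z}_p$ and ${\mathbb Z}_q$ are both subgroups of ${\mathbb Z}_n$, and Lemma~\ref{nonembedding2} applies directly; if $n = p^k$ with $k \ge 2$, then ${\mathbb Z}_{p^2} \le {\mathbb Z}_{p^k} = {\mathbb Z}_n$, and Lemma~\ref{nonembedding} applies directly. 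So one can either invoke Theorem~\ref{L0embedding} as a black box or cite the two lemmas according to the factorization of $n$.) Either way, $L_0({\mathbb Z}_n K_3)$ does not embed in any harmonic matroid, contradicting the extension we produced. Hence no such embedding of $M(n)$ exists.

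There is essentially no obstacle here: the real work was done in Lemmas~\ref{nonembedding}, \ref{nonembedding2}, and \ref{mnembedding}, and this theorem is a short corollary assembling them. The only point requiring the least care is the case split on the structure of the composite integer $n$ — making sure that every composite $n$ either has two distinct prime divisors or is divisible by the square of a prime, so that one of the two non-embedding lemmas is always applicable. This is elementary number theory, so I would state it in one sentence.
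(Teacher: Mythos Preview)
Your proposal is correct and follows exactly the paper's own route: assume an embedding of $M(n)$ in a harmonic matroid, extend it to $L_0(\mathbb{Z}_n K_3)$ via Lemma~\ref{mnembedding}, and then invoke Theorem~\ref{L0embedding} to obtain a contradiction when $n$ is composite. The only difference is that you spell out the (optional) case split on the factorization of $n$ in terms of Lemmas~\ref{nonembedding} and~\ref{nonembedding2}, whereas the paper simply cites Theorem~\ref{L0embedding} as a black box.
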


\begin{proof}
Assume that $M(n)$ embeds in a harmonic matroid $H$. By Lemma \ref
{mnembedding},  the embedding of  $M(n)$ extend to one of $L_0
({\mathbb Z_n} K_3)$. So the conclusion follows by Theorem \ref
{L0embedding}.
\end{proof}

The Lindstr\"om conjecture follows from Theorem \ref {gordon} and
Theorem \ref {main1},  as is shown in the following proof.

\begin{proof}[Proof of Theorem  \ref{main}]
By Theorem \ref {main1}, $M(n)$ is not embeddable in a harmonic
matroid when $n$ is  composite, therefore $M(n)$ is not
algebraically representable.

From Theorem \ref{gordon}, if $n$ is a prime number, then  $M(n)$ is
algebraically  representable. This completes the proof.
\end{proof}

\begin{proof}[Proof of Corollary  \ref{submain}]
If  $\mathfrak{G}$ is a subgroup of $ F^+ $, then $L_0 (\mathfrak{G}
K_3)$  is linear over $F$ \cite {b4}. Since every linear matroid
over a field is algebraic over the same field {\cite [Theorem
6.7.10] {oxley}}, then $L_0 (F^+ K_3)$ is algebraically
representable over $F$.

Since the characteristic of $F$ is $p$ then $L_0 ( {\mathbb Z_p}
K_3)$  is a submatroid of  $L_0 (F^+ K_3)$. By Lemma \ref
{mnembedding}, $M(p)$ is a submatroid of $L_0 ( {\mathbb Z_p} K_3)$.
So the conclusion follows  by Theorem \ref {gordon}.
\end{proof}

\section {Characteristic sets}

Define the \emph {linear characteristic set}, $\chi _{L} (M)$, to
be the set of field characteristics over which the matroid $M$ is
linear. Similarly, define the \emph {algebraic characteristic set},
$\chi _{A} (M)$, to be the set of field characteristics over which
the matroid $M$ is algebraic. The reader can find in \cite [Section
6.7, exercise 7 (ii)] {oxley} that, if $0 \in \chi _L (M)$, then
$\chi _{A} (M) = \{ 0, 2,3,5 , \dots \}$.

Corollary \ref {acset} shows that the point $D$ in the definition of
$ L_0 ( {\mathbb Z_n} K_3) $ is crucial for the characteristic set.
The \emph {lift matroid} $L ({\mathbb Z_n}  K_3)$ \cite{b2}  of the
group expansion graph $ {\mathbb Z_n}  K_3$, is  $L_0 ( {\mathbb
Z_n} K_3) \setminus D$, for $n \in {\mathbb Z}_{>0}$.

\begin{cor} \label{acset} Let  $L ( {\mathbb Z_n} K_3)$ and  $L_0 ( {\mathbb Z_n} K_3)$
be the lift matroid and complete lift matroid of $ {\mathbb Z_n}
K_3$, for  $ n \in {\mathbb Z}_{>0}$. Then

(a) $\chi _{L} ( L ( {\mathbb Z_n} K_3) ) =  \chi _{L} ( M(n)
\setminus d ) = \{p \text { prime} : p \nmid n  \} \cup \{ 0 \}$,
for $n \in {\mathbb Z}_{>0}$.

\vspace{0.2cm} (b) $\chi _{A} ( L ( {\mathbb Z_n} K_3) ) =  \chi
_{A} ( M(n) \setminus d ) = \{0, 2,3,5 , \dots \}$, for $ n \in
{\mathbb Z}_{>0}$.

\vspace{0.2cm}
(c) $ \chi _{A} ( L_0 ( {\mathbb Z_n} K_3) ) = \chi _{A} ( M(n) )= \left\{\begin{array}{ll}
                                                                    \{n \} & \mbox{ if $n$ is prime,} \\
                                                                  \emptyset  & \mbox{ if $n$ is composite.}
                                                       \end{array}
                                               \right.
   $
\end{cor}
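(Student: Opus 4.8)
The three equalities descend in difficulty, and I would dispatch (c) and (b) before attacking (a).

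\textbf{Part (c).} This is a repackaging of work already done. Since $M(n)$ is a submatroid of $L_0(\mathbb Z_n K_3)$ (Lemma~\ref{mnembedding}), restricting an algebraic representation gives $\chi_A(L_0(\mathbb Z_n K_3))\subseteq\chi_A(M(n))$. Conversely, an algebraic representation of $M(n)$ over a field of characteristic $c$ embeds $M(n)$ into a full algebraic matroid, which is harmonic by Lemma~\ref{lindstrom}; by Lemma~\ref{mnembedding} this embedding extends to $L_0(\mathbb Z_n K_3)$, i.e.\ to an algebraic representation of $L_0(\mathbb Z_n K_3)$ in the same characteristic, so $\chi_A(M(n))\subseteq\chi_A(L_0(\mathbb Z_n K_3))$. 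Thus the two sets coincide, and by Theorem~\ref{main} it is empty when $n$ is composite. When $n=p$ is prime, Theorem~\ref{gordon} gives $\chi_A(M(p))\subseteq\{p\}$, while Zaslavsky's linear representation of $L_0(\mathbb Z_p K_3)$ over $\mathbb F_p$ (used in the proof of Theorem~\ref{L0embedding}) together with ``linear implies algebraic'' gives $p\in\chi_A(L_0(\mathbb Z_p K_3))$; hence the set is $\{p\}$.

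\textbf{Part (b), and the setup for (a).} Once (a) is known, (b) is immediate: no prime divides $0$, so by (a) we have $0\in\chi_L(M(n)\setminus d)$ and $0\in\chi_L(L(\mathbb Z_n K_3))$, and then the standard fact quoted in Section~4 (``$0\in\chi_L(M)$ implies $\chi_A(M)=\{0,2,3,5,\dots\}$'') forces both algebraic characteristic sets to equal $\{0,2,3,5,\dots\}$. For part (a) the organising object is a ``shift'' projectivity: in a linear representation of $M(n)\setminus d$ over $F$, the points $a_0,\dots,a_{n-1}$ lie on a line $\ell_a$, and $c_0,c_1$ are the centres of perspectivities $\sigma_0,\sigma_1\colon\ell_a\to\ell_b$ realising $a_i\mapsto b_i$ and $a_i\mapsto b_{i+1}$ onto the line $\ell_b$ of the $b_i$'s; then $\tau:=\sigma_0^{-1}\sigma_1$ is a projectivity of $\ell_a$ with $\tau(a_i)=a_{i+1}$, hence (as $n\ge 3$ points are cyclically permuted) an element of $\mathrm{PGL}(2,F)$ of order exactly $n$ with an orbit of size $n$ on the line.

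\textbf{Part (a).} Identifying $M(n)$ inside $L_0(\mathbb Z_n K_3)$ as in Lemma~\ref{mnembedding} and deleting $D,C_2,\dots,C_{n-1}$ presents $M(n)\setminus d$ as a deletion of $L(\mathbb Z_n K_3)$, so $\chi_L(L(\mathbb Z_n K_3))\subseteq\chi_L(M(n)\setminus d)$ for free. For the reverse inclusion I would start from a linear representation of $M(n)\setminus d$, form $\tau$ as above, put $\sigma_j:=\sigma_0\tau^j$ and let $C_j$ be its centre; a short projective computation shows the $C_j$ lie on a common line (the axis of this pencil of perspectivities), and checking that no accidental collinearities arise promotes the representation to one of $L(\mathbb Z_n K_3)$, giving equality of the two characteristic sets. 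It then suffices to compute $\chi_L(L(\mathbb Z_n K_3))$. If the characteristic $p$ of $F$ divides $n$ and $n$ is composite (so $n\neq p$), the existence of $\tau$ contradicts the fact that an element of $\mathrm{PGL}(2,F)$ over a field of characteristic $p$ has order either $1$, or an integer prime to $p$, or $p$ (according to whether its lift to $\mathrm{GL}(2,\overline F)$ is diagonalisable or a single Jordan block); so such characteristics are excluded. Conversely, for $p\nmid n$ (or characteristic $0$) one passes to $\overline F$, takes $\tau\colon z\mapsto\zeta z$ for a primitive $n$-th root of unity $\zeta$, puts $a_i=\zeta^i$ on a line, and runs the previous construction in reverse to exhibit an explicit linear representation of $L(\mathbb Z_n K_3)$, hence of $M(n)\setminus d$.

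The decisive point, and the main obstacle, is that last computation: the torsion analysis of $\mathrm{PGL}(2)$ over fields of positive characteristic — pinning down exactly when an order-$n$ element with an $n$-point orbit exists — and the combinatorial bookkeeping in the extension step (collinearity of the $C_j$ and the absence of spurious dependencies) must both be carried out carefully. One caveat to record: when $n$ is itself prime the unipotent case makes characteristic $n$ available as well, so for prime $n$ the linear characteristic set is in fact all of $\{0,2,3,5,\dots\}$; thus part (a) is stated uniformly but is substantive, as written, only for composite $n$ — which is precisely the case not already handled by part (c).
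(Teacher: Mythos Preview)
Your treatment of parts (c) and (b) matches the paper's: (c) is dispatched there by citing Theorem~\ref{L0embedding} and Corollary~\ref{submain}, which package exactly the restriction/extension argument you spell out, and (b) follows from $0\in\chi_L$ together with the standard fact quoted at the start of Section~4.

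For part (a) you take a genuinely different route. The paper simply invokes Zaslavsky's result \cite{b4}: $L(\mathbb Z_n K_3)$ is linear over $F$ precisely when $\mathbb Z_n$ embeds in the multiplicative group $F^*$, and then observes that $\mathbb Z_n\le GF(p^m)^*$ for some $m$ iff $p\nmid n$, while $\mathbb Z_n\le\mathbb C^*$ always. Your approach via the projectivity $\tau=\sigma_0^{-1}\sigma_1\in\mathrm{PGL}(2,F)$ and the torsion dichotomy for $\mathrm{PGL}(2)$ in positive characteristic (order $1$, or $p$, or coprime to $p$) is a self-contained substitute for the black-box citation; it is more work but avoids the external reference entirely. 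Your approach also has the merit of explicitly addressing the equality $\chi_L(L(\mathbb Z_n K_3))=\chi_L(M(n)\setminus d)$ through the perspectivity-extension construction --- the paper's proof never really treats this. Note that your extension step can be streamlined: the intersection $\ell_a\cap\ell_b$ in the ambient projective plane plays the role of the deleted point $d$, so the harmonic-conjugate argument of Lemma~\ref{mnembedding} runs verbatim there and gives both the collinearity of the $C_j$ and the absence of accidental dependencies in one stroke.

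Your closing caveat is correct and worth recording: for prime $n=p$ the unipotent case (equivalently, the additive representation of $L_0(\mathbb Z_p K_3)$ over $\mathbb F_p$ restricted to $L(\mathbb Z_p K_3)$) shows $p\in\chi_L(L(\mathbb Z_p K_3))$, so part (a) as stated holds only for composite $n$. The paper does not flag this.
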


\begin{proof} [Proof of (a) and (b)]
Since ${\mathbb Z}_n$  is a subgroup of ${\mathbb C} ^*$, and thus
by  \cite [Theorem 2.1] {b4}, $ L ( {\mathbb Z_n} K_3) $ is linearly
representable over ${\mathbb C}$, it follows that $0 \in  \chi _L (L
( {\mathbb Z_n}  K_3))$. So, $\chi _{A} (L ( {\mathbb Z_n} K_3)) =
\{ 0,2,3,5 , \dots \}$.

Let $p \in  \{p  \text { prime}: n \mid p^m - 1, \text{ for some } m
\in {\mathbb Z}_{>0} \}$  = $\{ p \text { prime}: p \nmid n  \}$.
There exists $m \in {\mathbb Z}_{>0}$ such that $n \mid p^m - 1$.
So, $n$ divides the order of the group $ {\mathbb Z}_{p^m -1}$ =
$GF(p^m)^* $. Therefore ${\mathbb Z}_n$ is a subgroup of $
GF(p^m)^*$, thus   $\{ p \text { prime}:  p \nmid n \}$  $\subseteq
\chi _{L} ( L ( {\mathbb Z_n} K_3) )$.

Now, suppose that there exists a prime $p$ such that  $\forall m \in
{\mathbb Z}_{>0}$, $ n \nmid p^m -1$. It follows that ${\mathbb
Z}_n$ $\nleq$  $GF(p^m)^*$. Then  $L ( {\mathbb Z_n} K_3) $ is not
linearly representable over  $GF(p^m)$. So $ p \notin \chi _{L} ( L
( {\mathbb Z_n} K_3))$.
\end{proof}

\begin{proof} [Proof of (c)]
The proof follows by Theorem \ref {L0embedding} and  Corollary \ref
{submain}.
\end{proof}

\end{document}